\documentclass[10pt, reqno]{amsart}
\usepackage{amsaddr}
\usepackage{graphicx,color}
\usepackage{float}
\usepackage{hyperref}
 \usepackage{mathtools}
 \usepackage{amsmath,amssymb}
 \usepackage{amsthm}
 \usepackage{empheq}
 \DeclareMathOperator\supp{supp}
\usepackage[a4paper,
bindingoffset=0.2in,
left=1in,
right=1in,
top=1in,
bottom=1in,
footskip=.25in]{geometry}
\usepackage{caption}
\usepackage{subcaption}
\usepackage{concmath}
\vfuzz2pt 
\hfuzz2pt 
\newtheorem{thm}{Theorem}[section]

\newtheorem{lem}[thm]{Lemma}
\newtheorem{prop}[thm]{Proposition}
\theoremstyle{definition}
\newtheorem{defn}[thm]{Definition}
\newtheorem{rem}[thm]{Remark}
\numberwithin{equation}{section}


\newcommand{\Real}{\mathbb R}

\begin{document}

\title[Schrödinger equation on the half line]{The interior-boundary Strichartz estimate for the Schrödinger equation on the half line revisited}%
\author{B\.{I}lge Köksal and Türker Özsarı}
\address{Department of Mathematics, Bilkent University, Ankara, 06800 Turkey}%
\email{turker.ozsari@bilkent.edu.tr}%

\thanks{TÖ's research is partially supported by the Science Academy's Young Scientist Award Program (BAGEP 2020)}%
\subjclass{35A22, 35Q55, 35C15, 35B65, 35B45}
\keywords{Fokas method, unified transform method, Strichartz estimates, Schrödinger equation}%

\begin{abstract} It was shown by the second author in \cite{OY19} for the biharmonic Schrödinger equation and most recently by Himonas and Mantzavinos \cite{Him20} for 2D Schrödinger equation that Fokas method based formulas are capable of defining weak solutions of associated nonlinear initial boundary value problems (ibvps) below the Banach algebra threshold. In view of these results, we revisit the theory of interior-boundary Strichartz estimates for the Schrödinger equation posed on the right half line, considering both Dirichlet and Neumann cases.  Finally, we apply these estimates to obtain low regularity solutions for the nonlinear Schrödinger equation (NLS) with Neumann boundary condition and a coupled system of NLS equations defined on the half line with Dirichlet/Neumann boundary conditions.
\end{abstract}

\maketitle
\section{Introduction}

\setlength{\parskip}{\baselineskip}%
\setlength{\parindent}{0pt}%

Well-posedness of the initial-boundary value problem (ibvp) for the nonlinear Schrödinger equation (NLS)  on various domains with inhomogeneous boundary data has been studied in several papers, see e.g.; \cite{Aud19}, \cite{Bona18}, \cite{Esq19}, \cite{FHM17}, \cite{Hay21}, \cite{Hay21b}, \cite{Holmer}, \cite{Ozs12}, \cite{Ozs18},  \cite{Ozs15}, \cite{Ozs11}, \cite{Str11}. Here we are concerned with the particular case where the spatial domain is the right half-line. Consider first the ibvp for the NLS given by
\begin{align}
&y_{t}+Py=f(y), \quad (x,t)\in\mathbb{R}_+\times (0,T),\label{maineq}\\
&y(x,0)=y_0(x),\label{init}\\
&[By](t)=g(t)\label{bdry},
\end{align} where $P=-i\partial_x^2$ (formal Schrödinger operator), $f$ is the complex valued power type function defined by $f(y)=\lambda |y|^py$ with $\lambda\in \mathbb{C}$, $p>0$, and $B$ is a trace operator given by $B=\gamma_0$ (Dirichlet trace) or $B=\gamma_1$ (Neumann trace) but more general boundary conditions could also be considered.  Solutions of the nonlinear problem \eqref{maineq}-\eqref{bdry} can be obtained by applying a fixed point theorem to the linear solution operator, which is constructed based on a formula for solutions of an associated linear ibvp. The latter problem is usually studied via a decompose-and-reunify approach. In this approach, one decomposes the problem into three subproblems (i) a homogeneous Cauchy problem with no interior source, (ii) a nonhomogeneous Cauchy problem with zero initial value, and (iii) an ibvp with zero initial and interior data. Spatial regularity of both types of Cauchy problems for NLS are widely studied in the literature, see for instance Cazenave's book \cite{Caz03} - a classical reference on this topic. However, much less effort was given for the temporal regularity of these Cauchy problems as well as the spatial regularity of the ibvp with an inhomogeneous boundary datum. We are aware of some approaches regarding the treatment of the ibvp for the half line problem.  Holmer \cite{Holmer} studied this problem by constructing a boundary forcing operator based on the Riemann-Liouville fractional integral, an approach that was previously applied to the ibvp for the Korteweg-de Vries (KdV) equation posed on the half line \cite{col02}. Some recent papers studied the same ibvp by analyzing the solution formula constructed with one of the traditional integral transforms. For instance, Bona-Sun-Zhang \cite{Bona18} used the Laplace transform in temporal variable, and Esquivel-Hayashi-Kaikina \cite{Esq19} used the Fourier sine transform. Finally, Fokas-Himonas-Mantzavinos \cite{FHM17} introduced an approach  utilizing the integral representation formula obtained through the unified transform method of Fokas \cite{Fbook}.

In \cite{FHM17}, authors treated \eqref{maineq}-\eqref{bdry} at the high regularity level (i.e., in $H_x^s(\mathbb{R}_+)$ with $s>1/2$) and obtained estimates in the $L_t^\infty(0,T;H_x^s(\mathbb{R}_+))$ norm.  In this setting, $H_x^s(\mathbb{R}_+)$ is a Banach algebra, i.e., $$|\phi\psi|_{H_x^s(\mathbb{R}_+)}\lesssim |\phi|_{H_x^s(\mathbb{R}_+)}|\psi|_{H_x^s(\mathbb{R}_+)} \text{ for } \phi,\psi\in {H_x^s(\mathbb{R}_+)},$$ and therefore handling the nonlinearities via contraction is relatively easier. In the low regularity setting $s\le \frac{1}{2}$, $H_x^s(\mathbb{R}_+)$ looses its algebra structure, and estimates in the $L_t^\infty(0,T;H_x^s(\mathbb{R}_+))$ norm are not good enough to perform the associated nonlinear analysis.  The classical method in the theory of nonlinear dispersive PDEs for dealing with this difficulty is to prove Strichartz type estimates in mixed norm function spaces $L_t^\lambda(0,T;H_x^{{s,r}}(\mathbb{R}_+))$, where $(\lambda,r)$ satisfies a special {admissibility} condition intrinsic to the underlying evolution operator. Holmer \cite{Holmer} and Bona, et al. \cite{Bona18} gave proofs of such estimates for the Dirichlet problem in the low regularity setting by analyzing the representation formulas obtained through Riemann–Liouville fractional integral and Laplace transform, respectively.

Strichartz estimates, first noted in \cite{Strich} within the framework of the Fourier restriction problem, are a group of inequalities for linear dispersive PDEs that allow us to bound the size and decay of solutions in mixed norm Lebesgue-Sobolev spaces.  These estimates are mostly established for Cauchy type problems where the spatial domain is the whole Euclidean space.  The results on other geometries are rather limited, and not as strong. In these results, either the given geometry has no boundary (e.g., a boundaryless manifold) or else the boundary conditions are set to zero.  Therefore, the estimates are still given with respect to only initial and interior data.  On the other hand, Strichartz estimates for inhomogeneous ibvps are rare, and there are relatively much fewer work in this direction.

Strichartz estimates generally rely on local-in-time dispersive estimates. In second author's work \cite{OY19} on the biharmonic nonlinear Schrödinger equation (BNLS) with Dirichlet-Neumann boundary conditions, it was shown that these estimates can also be proven through the analysis of the representation formula obtained via the Fokas method. Most recently, Himonas and Mantzavinos \cite{Him20} made use of the same idea for obtaining the low regularity solutions of NLS posed on the half plane with Dirichlet boundary condition.  In view of these two papers, one can also expect that recent UTM based $L_t^\infty H_x^s(\mathbb{R}_+)$ estimates of Fokas, et al. \cite{FHM17} should extend to UTM based $L_t^\lambda H_x^{s,r}(\mathbb{R}_+)$ estimates in the one dimensional setting. The goal of this paper is to revisit the one dimensional theory, prove Strichartz estimates and apply them to NLS and a system of coupled NLS equations defined on the half line with Dirichlet or Neumann boundary conditions. The Neumann case is a topic which was not treated also with other methods; \cite{Bona18} and \cite{Holmer} only considered the Dirichlet case.  The second author's paper \cite{Batal16} and later Himonas, et. al. \cite{Him19} treated the Neumann problem with Laplace transform and UTM based formulas, respectively, however both of these papers handle only the high regularity solutions (i.e., $s>1/2$).

\subsection*{Outline of the paper}

The paper is organized as follows. In Section \ref{decomSec}, we review the decompose and reunify algorithm for treatment of the linear ibvp, in particular give the Fokas method based representation formulas for Dirichlet and Neumann problems. In Section \ref{quickreview}, we look over the Strichartz estimates for Cauchy problems and review known time estimates. In Section \ref{statebc} we state boundary Strichartz estimates for Dirichlet and Neumann problems. Sections \ref{pf1}-\ref{pf2} present the proofs of these estimates. Finally, in Section \ref{Apps}, we apply the linear estimates to prove local wellposedness of NLS and coupled system of NLS equations.

\section{Decompose-and-reunify}\label{decomSec} In this section, we review the decompose-and-reunify algorithm to study \eqref{maineq}-\eqref{bdry}.
Abusing the notation, we first write the associated linear nonhomogeneous problem:
\begin{subequations}\label{lineq}
\begin{empheq}{align}
&y_{t}+Py=f(x,t), \quad (x,t)\in\mathbb{R}_+\times (0,T),\label{maineqlin}\\
&y(x,0)=y_0(x),\label{initlin}\\
&[By](t)=g(t)\label{bdrylin},
\end{empheq}
\end{subequations}
where $f:\mathbb{R_+}\times (0,T)\rightarrow \mathbb{C}$.
We fix spatial bounded extension operators $(\cdot)^*$, say from a Sobolev space defined on $\mathbb{R}_+$, into a Sobolev space defined on $\mathbb{R}$ and consider (i) a homogeneous Cauchy problem with nonzero initial datum, (ii) a nonhomogeneous Cauchy problem with zero initial datum and (iii) an ibvp with zero initial and interior data:
\begin{subequations}\label{v}
\begin{empheq}{align}
&v_{t}+Pv=0, \quad (x,t)\in\mathbb{R}\times (0,T),\label{maineqlinHC}\\
&v(x,0)=y_0^*(x),\label{initlinHC}
\end{empheq}
\end{subequations}
\begin{subequations}\label{z}
\begin{empheq}{align}
&z_{t}+Pv=f^*(x,t), \quad (x,t)\in\mathbb{R}\times (0,T),\label{maineqlinNHC}\\
&z(x,0)=0,\label{initlinNHC}
\end{empheq}
\end{subequations}
\begin{subequations}\label{u}
\begin{empheq}{align}
&u_{t}+Pu=0, \quad (x,t)\in\mathbb{R}_+\times (0,T'),\label{maineqlinH}\\
&u(x,0)=0,\label{initlinH}\\
&[Bu](t)=h(t)\label{bdrylinH}.
\end{empheq}
\end{subequations}
In the above equations, $y_0^*$ and $f^*$ are spatial extensions of initial and interior data, $$h(t)\equiv g(t)-[Bv](t)-[Bz](t)$$ in which for convenience we extend the RHS beyond the given time interval $(0,T)$ such that it is zero for $t>T'$ for some $T'>T$. The condition, $T'>T$ is convenient for a smooth transition to zero so that the given regularity level of $h$ on $(0,T)$ is preserved on the extended interval. Now, the solution of \eqref{maineqlin}-\eqref{bdrylin} can be defined via reunification as follows:
\begin{equation}\label{reunify}
  y=v|_{\mathbb{R}_+\times (0,T)}+z|_{\mathbb{R}_+\times (0,T)}+u|_{(0,T)}.
\end{equation}
Note that we subtract the boundary traces associated with the two Cauchy problems when we define $h$, therefore one needs to know existence of these traces. A representation formula for the solution of the homogeneous Cauchy problem \eqref{maineqlinHC}-\eqref{initlinHC} is formally given via Fourier transform as
\begin{equation}\label{vform}
\begin{split}
  v(x,t)&=\frac{1}{2\pi}\int_{-\infty}^{\infty}e^{ikx-ik^2t}\widehat{y_0^*}(k)dk.
\end{split}
  \end{equation} We will use the notation $v(x,t)=e^{-tP}y_0^*(x)$ to denote the solution of the homogeneous Cauchy problem.  The above formula is well defined for $y_0^*\in \mathcal{S},$ and one has
$            \hat{v}(k,t) = e^{-ik^2t}\widehat{y_0^*}(k)
          $ for $y_0^*\in \mathcal{S}$, which implies the conservation property $|e^{-tP}y_0^*|_{H^s(\mathbb{R})} = |y_0^*|_{H^s(\mathbb{R})}$ for $y_0^*\in \mathcal{S}$. Since $\mathcal{S}$ is dense in $H^s(\mathbb{R})$, $e^{-tP}$ easily extends to a group of isometries on $H^s(\mathbb{R})$ (still denoted same).  Moreover, we have a Duhamel formulation for the solution of \eqref{maineqlinNHC}-\eqref{initlinNHC}:
\begin{equation}\label{zform}
  z(x,t)=\int_0^te^{-(t-t')P}{f^*}(x,t')dt'.
\end{equation}
The solution of the ibvp \eqref{maineqlinH}-\eqref{bdrylinH} with $B=\gamma_0$ (Dirichlet b.c.), and zero initial and interior data is given by the Fokas method as a complex integral \cite{Fbook}:
\begin{equation}\label{uform_dir}
  u(x,t)=\frac{1}{\pi}\int_{\partial D^+}e^{ikx-ik^2t}k\tilde{h}(k^2,T')dk,
\end{equation} where $D^+=\{k\in \mathbb{C}_+\,|\,\Re(ik^2)<0\},$ its boundary $\partial D^+$ is positively oriented (see Figure \ref{Dplus}), and $\tilde{h}(k^2,T')=\int_0^{T'}e^{ik^2s}h(s)ds.$
The solution in the case $B=\gamma_1$ (Neumann b.c.) takes the form
\begin{equation}\label{uform_neu}
  u(x,t)=-\frac{i}{\pi}\int_{\partial D^+}e^{ikx-ik^2t}\tilde{h}(k^2,T')dk.
\end{equation}
We will use the notation $\mathcal{T}_B(t)h$ to denote the solution of the ibvp, namely to denote the right hand side of \eqref{uform_dir} or \eqref{uform_neu} (depending on $B$) obtained through the Fokas method.
Therefore, we can rewrite \eqref{reunify} as follows:
\begin{equation}
\begin{split}\label{reunify2}
  y(t) =\left.e^{-tP}y_0^*\right|_{\mathbb{R}_+\times (0,T)}+\left.\int_0^te^{-(t-t')P}{f^*}(t')dt'\right|_{\mathbb{R}_+\times (0,T)}+\left.\mathcal{T}_B(t)h\right|_{(0,T)}.
\end{split}
\end{equation}
\begin{figure}[h]
  \centering
  \includegraphics[width=4cm]{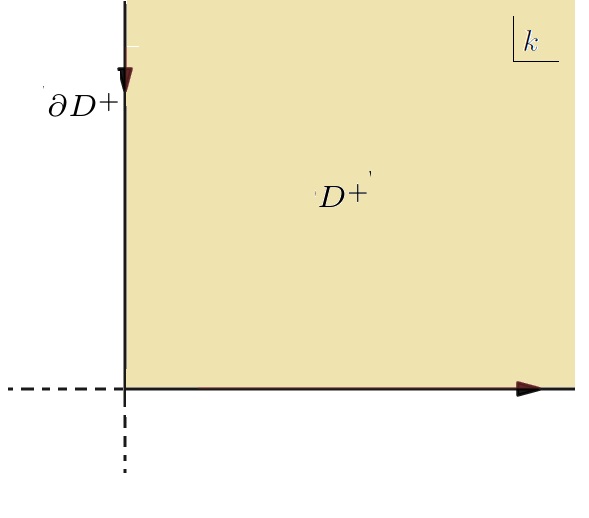}
  \caption{The contour of integration}\label{Dplus}
\end{figure}
\begin{rem}
  We recall that the integral representation formulas \eqref{uform_dir} and \eqref{uform_neu} is first obtained by assuming that $h$ is smooth and has sufficient decay.  However, it is then shown that the same formula makes sense under much weaker regularity conditions imposed on $h$ as shown in this paper.  Therefore, this formula in particular defines weak solutions.
\end{rem}
\section{Linear estimates}\label{StrSec}
\subsection{A quick review}\label{quickreview}
Cauchy problems \eqref{v} and \eqref{z} are well studied in the literature. For instance, regarding the homogeneous Cauchy problem, we have the theorem below in which the Schrödinger admissibility condition
\begin{equation}\label{adm}
  \frac{1}{\lambda}+\frac{1}{2r}=\frac{1}{4},\quad 2\le \lambda,r\le \infty
\end{equation} between indices $\lambda$ and $r$ is important.
\begin{thm}[\cite{Holmer}]\label{Cauchythm}
  Let $s\in \mathbb{R}$, $y_0^*\in H_x^{s}(\mathbb{R})$, $(\lambda,r)$ be Schrödinger admissible. Then $v(t)=e^{-tP}y_0^*$ defines a solution to \eqref{vform} that belongs to $C([0,T];H_x^s(\mathbb{R_+}))\cap C(\mathbb{R}; H_t^{\frac{2s+1}{4}}(0,T))$ such that \begin{itemize}
         \item[(i)] $|v|_{C([0,T];H_x^s(\mathbb{R_+}))}\lesssim |y_0^*|_{H_x^{s}(\mathbb{R})}$,
         \item[(ii)] $\displaystyle\sup_{x\in \mathbb{R}} |v(x)|_{H_t^{\frac{2s+1}{4}}(0,T)}\lesssim \langle T\rangle^\frac{1}{4}|y_0^*|_{H_x^{s}(\mathbb{R})},$
         \item[(iii)] $|v|_{L_t^\lambda(0,T; H_x^{s,r}(\mathbb{R}))}\lesssim |y_0^*|_{H_x^{s}(\mathbb{R})},$ where constants of inequalities depend only on $s$.
       \end{itemize}
\end{thm}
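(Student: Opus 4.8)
The three assertions are in essence classical, so I would mainly assemble the right pieces. \emph{Part (i)} follows at once from the machinery already in place: the isometry $|e^{-tP}y_0^*|_{H^s(\mathbb{R})}=|y_0^*|_{H^s(\mathbb{R})}$ recorded in \eqref{Fvarrel}, together with the trivial bound $|w|_{H^s(\mathbb{R}_+)}\le|w|_{H^s(\mathbb{R})}$ for the restriction operator, yields the inequality; continuity of $t\mapsto v(t)$ into $H^s(\mathbb{R}_+)$ (indeed into $H^s(\mathbb{R})$) comes from strong continuity of the unitary group $e^{-tP}$, verified first on $\mathcal S$ via \eqref{vform} and dominated convergence, then extended by density since $\mathcal S$ is dense in $H^s(\mathbb{R})$.

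For \emph{Part (iii)} I would first peel off the regularity: since $e^{-tP}$ acts as the Fourier multiplier $e^{-ik^2t}$, it commutes with $\langle\partial_x\rangle^s$, so it suffices to prove $|e^{-tP}\phi|_{L_t^\lambda(0,T;L_x^r(\mathbb{R}))}\lesssim|\phi|_{L_x^2(\mathbb{R})}$ for admissible $(\lambda,r)$, and then restrict to $\mathbb{R}_+$ for free. This is the standard $TT^*$ argument: from the explicit Gaussian-type kernel one has the dispersive bound $|e^{-tP}\phi|_{L_x^\infty}\lesssim|t|^{-1/2}|\phi|_{L_x^1}$, which interpolated with the $L^2$ conservation gives $|e^{-tP}\phi|_{L_x^r}\lesssim|t|^{-(1/2-1/r)}|\phi|_{L_x^{r'}}$; inserting this into $T T^*$ and applying the one-dimensional Hardy--Littlewood--Sobolev inequality closes the estimate (the admissibility $\frac1\lambda+\frac1{2r}=\frac14$ is exactly what HLS requires, and note that the endpoint $r=\infty$ is permitted in dimension one). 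No Christ--Kiselev truncation is needed here because the estimate is for the free evolution itself.

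For \emph{Part (ii)}, the sharp time-trace estimate, I would argue directly from \eqref{vform}. Splitting the $k$-integral into $k>0$ and $k<0$ and substituting $\tau=k^2$ on each piece identifies, up to harmless constants and the unimodular factor $e^{\pm i\sqrt\tau x}$, the temporal Fourier transform of $v(x,\cdot)$ with $\tau\mapsto\frac{1}{\sqrt\tau}\widehat{y_0^*}(\pm\sqrt\tau)\mathbf 1_{\tau>0}$; computing the $H_t^{\frac{2s+1}{4}}$ norm and changing variables back produces, modulo time-localization, the quantity $\int_{\mathbb{R}}\langle k\rangle^{2s+1}\frac{1}{|k|}|\widehat{y_0^*}(k)|^2\,dk$. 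On $|k|\ge1$ the weight is comparable to $\langle k\rangle^{2s}$, i.e. this part is bounded by $|y_0^*|_{H_x^s}^2$. The range $|k|\le1$ must be treated separately: there the weight $1/|k|$ is non-integrable, so one cannot work globally in $t$; instead one uses that, for each fixed $x$, the low-frequency part of $v(x,\cdot)$ is a function whose temporal Fourier support lies in a bounded $\tau$-interval and whose derivatives in $t$ are all controlled by $|\widehat{y_0^*}|_{L^2(|k|\le1)}\le|y_0^*|_{H_x^s}$, so on the bounded interval $(0,T)$ its $H_t^{\frac{2s+1}{4}}(0,T)$ norm is bounded by a power of $\langle T\rangle$ times $|y_0^*|_{H_x^s}$; bookkeeping the constants gives the stated $\langle T\rangle^{1/4}$. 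All bounds being uniform in $x$, one takes the supremum over $x$, and continuity of $x\mapsto v(x,\cdot)\in H_t^{\frac{2s+1}{4}}(0,T)$ follows by dominated convergence in the above Fourier representation.

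I expect the \emph{main obstacle} to be precisely the low-frequency piece in (ii): it is where the clean global-in-time Fourier computation breaks down, where the finiteness of the interval $(0,T)$ is genuinely used, and where the exact power $\langle T\rangle^{1/4}$ has to be extracted — all while keeping every constant uniform in the spatial parameter $x$. Parts (i) and (iii) are routine given the isometry property and the dispersive decay.
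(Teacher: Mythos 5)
The paper does not prove Theorem~\ref{Cauchythm}: it is stated as background and attributed in its entirety to \cite{Holmer}, so there is no in-text argument to compare against. The only related ingredient the paper itself supplies is the density-extension of the isometry $|e^{-tP}y_0^*|_{H^s(\mathbb{R})}=|y_0^*|_{H^s(\mathbb{R})}$ following \eqref{Fvarrel}, which you also use for part~(i). Your sketch is essentially the classical argument and is sound: (i) is the unitary isometry plus boundedness of the restriction $H^s(\mathbb{R})\to H^s(\mathbb{R}_+)$; (iii) is the textbook $TT^*$ Strichartz proof via the dispersive bound $|e^{-tP}\phi|_{L^\infty_x}\lesssim|t|^{-1/2}|\phi|_{L^1_x}$, Riesz--Thorin interpolation against $L^2$ conservation, and Hardy--Littlewood--Sobolev, with no Christ--Kiselev needed for the free flow; (ii) is obtained by computing the temporal Fourier transform of $v(x,\cdot)$ via the substitution $\tau=k^2$ and splitting into $|k|\gtrless 1$. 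You correctly flag the one genuinely delicate spot: after the change of variables one faces the weight $\langle k\rangle^{2s+1}/|k|$, which is not integrable near $k=0$, so the global-in-time $H_t^{\frac{2s+1}{4}}(\mathbb{R})$ norm is \emph{not} controlled and the finiteness of $(0,T)$ must be used. Your treatment of this low-frequency piece (compact $\tau$-support, hence all $t$-derivatives of $v_{\text{low}}(x,\cdot)$ bounded uniformly in $x$ by $\|\widehat{y_0^*}\|_{L^2(|k|\le1)}\lesssim|y_0^*|_{H^s_x}$, hence an $H^{\frac{2s+1}{4}}(0,T)$ bound growing polynomially in $\langle T\rangle$) is the right idea; extracting the precise exponent $1/4$ is the bookkeeping you acknowledge and does not represent a conceptual gap. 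In short, your proposal reproduces the standard proof one expects to find in \cite{Holmer}, while the paper simply quotes that result.
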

For the nonhomogeneous Cauchy problem, the following theorem is known:
\begin{thm}[\cite{Bona18}, \cite{Holmer}]\label{NonhomCauchythm}
 Let $s\in \mathbb{R}$, $(\lambda,r)$ be Schrödinger admissible, $f^*\in L_t^{\lambda'}H_x^{s,r'}$. Then $z$ given by \eqref{zform} belongs to $C([0,T];H_x^s(\mathbb{R_+}))\cap C(\mathbb{R}; H_t^{\frac{2s+1}{4}}(0,T))$ such that \begin{itemize}
         \item[(i)] $|z|_{C([0,T];H_x^s(\mathbb{R_+}))}\lesssim |f^*|_{L_t^{\lambda'}(0,T;H_x^{s,r'}(\mathbb{R}))}$,
         \item[(ii)] if $-\frac{3}{2}<s<\frac{1}{2}$, then $\displaystyle\sup_{x\in \mathbb{R}} |z(x)|_{H_t^{\frac{2s+1}{4}}(0,T)}\lesssim \langle T\rangle^\frac{1}{4}|f^*|_{L_t^{\lambda'}(0,T;H_x^{s,r'}(\mathbb{R}))},$
         \item[(iii)] if $\frac{1}{2}<s<\frac{5}{2}$ and $\displaystyle\sup_{x\in\mathbb{R}}|f^*(x,\cdot)|_{H_t^\frac{2s-3}{4}}<\infty$, then $$\displaystyle\sup_{x\in \mathbb{R}} |z(x)|_{H_t^{\frac{2s+1}{4}}(0,T)}\lesssim \left(\sup_{x\in\mathbb{R}}|f^*(x,\cdot)|_{H_t^\frac{2s-3}{4}}+|f^*|_{L_t^{\lambda'}(0,T;H_x^{s,r'}(\mathbb{R}))}\right),$$
         \item[(iv)] $|z|_{L_t^\lambda(0,T; H_x^{s,r}(\mathbb{R}))}\lesssim |f^*|_{L_t^{\lambda'}(0,T;H_x^{s,r'}(\mathbb{R}))},$ where constants of inequalities depend only on $s$ except in item (iii), in which it also depends on $T$.
       \end{itemize}
\end{thm}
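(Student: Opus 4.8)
My plan is to deduce each of (i)--(iv) from the corresponding homogeneous estimate in Theorem~\ref{Cauchythm} by a $TT^{*}$ (duality) argument followed by a Christ--Kiselev type truncation, exploiting that $\{e^{-tP}\}_{t\in\mathbb{R}}$ is a group of isometries on every $H^{s}(\mathbb{R})$ and that $z(t)=e^{-tP}\int_{0}^{t}e^{t'P}f(t')\,dt'$, where $e^{t'P}$ denotes the inverse group element. Dualising the homogeneous Strichartz bound Theorem~\ref{Cauchythm}(iii) with respect to the $\langle D\rangle^{2s}$ pairing --- under which $W_x^{s,r}(\mathbb{R})$ and $W_x^{s,r'}(\mathbb{R})$ are in duality, as is $H^{s}(\mathbb{R})$ with itself --- shows that $F\mapsto\int_{0}^{T}e^{t'P}F(t')\,dt'$ maps $L_t^{\lambda'}(0,T;W_x^{s,r'}(\mathbb{R}))$ boundedly into $H^{s}(\mathbb{R})$.

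I would treat (i) and (iv) first. Since $e^{-tP}$ is an isometry of $H^{s}(\mathbb{R})$, one has $\|z(t)\|_{H^{s}}=\big\|\int_{0}^{t}e^{t'P}f(t')\,dt'\big\|_{H^{s}}$, and applying the dual bound above to the restriction of $f$ to $(0,t)$ gives $\sup_{0\le t\le T}\|z(t)\|_{H^{s}}\lesssim\|f\|_{L_t^{\lambda'}W_x^{s,r'}}$; $H^{s}$-valued continuity in $t$, hence also after restriction to $\mathbb{R}_{+}$, follows by approximating $f$ in $L_t^{\lambda'}W_x^{s,r'}$ by smooth, compactly supported functions. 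This yields (i). For (iv) one composes the dual bound with Theorem~\ref{Cauchythm}(iii) to obtain the non-retarded estimate $\big\|e^{-tP}\int_{0}^{T}e^{t'P}f(t')\,dt'\big\|_{L_t^{\lambda}W_x^{s,r}}\lesssim\|f\|_{L_t^{\lambda'}W_x^{s,r'}}$; since every Schrödinger-admissible $\lambda$ satisfies $\lambda\ge 4>\lambda'$, the Christ--Kiselev lemma replaces $\int_{0}^{T}$ by the retarded integral $\int_{0}^{t}$, which is precisely $z$, giving (iv).

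For the trace estimates (ii)--(iii) the same scheme is run with Theorem~\ref{Cauchythm}(ii) in place of the Strichartz bound. Fixing $x\in\mathbb{R}$ and writing the non-retarded object as $\big[e^{-tP}w\big](x)$ with $w=\int_{0}^{T}e^{t'P}f(t')\,dt'$, one has $\|w\|_{H^{s}}\lesssim\|f\|_{L_t^{\lambda'}W_x^{s,r'}}$ from the dual Strichartz bound, and Theorem~\ref{Cauchythm}(ii) then controls $\sup_{x}\big\|[e^{-tP}w](x)\big\|_{H_t^{(2s+1)/4}(0,T)}$ by $\langle T\rangle^{1/4}\|w\|_{H^{s}}$; a Christ--Kiselev step --- legitimate here because the target time index $(2s+1)/4$ lies in $(-\tfrac12,\tfrac12)$, which is exactly $-\tfrac32<s<\tfrac12$ --- converts the non-retarded operator into $z$ and gives (ii). When $\tfrac12<s<\tfrac52$ this last step is unavailable, so instead I would use $\partial_{t}z=f-Pz$ together with the equivalence $\|g\|_{H_t^{(2s+1)/4}}\simeq\|g\|_{L_t^{2}}+\|\partial_{t}g\|_{H_t^{(2s-3)/4}}$: the $\partial_{t}g$ term splits into $\sup_{x}\|f(x,\cdot)\|_{H_t^{(2s-3)/4}}$ --- whence the extra hypothesis --- and $\sup_{x}\|\partial_{x}^{2}z(x,\cdot)\|_{H_t^{(2s-3)/4}}$, which is estimated by re-applying the already-established part (ii) at spatial regularity $s-2$ (admissible since $-\tfrac32<s-2<\tfrac12$); the $L_t^{2}$ term is bounded by $\sqrt{T}\,\sup_{x}\|z(x,\cdot)\|_{L_t^{\infty}(0,T)}\lesssim\sqrt{T}\,\|z\|_{L_t^{\infty}(0,T;H_x^{s}(\mathbb{R}))}$ using $H_x^{s}(\mathbb{R})\hookrightarrow L_x^{\infty}(\mathbb{R})$ for $s>\tfrac12$ and part (i), which is the source of the $T$-dependence of the constant in (iii).

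The main obstacle is the passage through Christ--Kiselev with a fractional-in-time target rather than a Lebesgue one: the lemma in its usual form needs an $L^{q}$ target, and the extension to $H_t^{\sigma}=B_{2,2}^{\sigma}$ rests on an unconditional square-function description of $H_t^{\sigma}$ available only for $|\sigma|<\tfrac12$. This is precisely what pins the admissible window $-\tfrac32<s<\tfrac12$ in (ii) and forces the detour through $\partial_{t}z=f-Pz$ in the complementary range $\tfrac12<s<\tfrac52$ of (iii), at the price of a boundary norm of $f$ and of the loss of $T$-uniformity. Everything else is the routine $TT^{*}$/group bookkeeping indicated above; the full details are carried out in \cite{Bona18} and \cite{Holmer}.
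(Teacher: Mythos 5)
The paper does not actually prove Theorem~\ref{NonhomCauchythm}: it quotes it verbatim from \cite{Bona18} and \cite{Holmer} as a standing ingredient of the decompose-and-reunify scheme, and never revisits it. So there is no internal proof here against which your sketch can be checked line by line, and the appropriate review is a plausibility audit of your reconstruction rather than a comparison.

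Your overall scheme is the standard one and, as far as it goes, is sound. The adjoint (``$T^{*}$'') step is correct: since $e^{-tP}$ is unitary, the $\langle D\rangle^{2s}$-adjoint of $u_{0}\mapsto e^{-tP}u_{0}$ (as a map $H^{s}\to L_{t}^{\lambda}W_{x}^{s,r}$) is indeed $F\mapsto\int_{0}^{T}e^{t'P}F(t')\,dt'$, yielding (i) and the non-retarded version of (iv). The Christ--Kiselev hypothesis $\lambda>\lambda'$ is automatic here because Schr\"odinger-admissibility \eqref{adm} forces $\lambda\ge4$, so (iv) follows. Your arithmetic identifying $-\tfrac32<s<\tfrac12$ with $\abs{(2s+1)/4}<\tfrac12$ is also correct, and this is genuinely the source of the restriction in (ii): the characteristic function of $(0,T)$ is a bounded multiplier on $H_{t}^{\sigma}$ precisely for $\abs{\sigma}<\tfrac12$.

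Two cautions worth flagging. First, the passage from the non-retarded to the retarded operator in (ii) with an $H_{t}^{\sigma}(0,T)$ target is not literally an invocation of the Christ--Kiselev lemma, which is stated for Lebesgue-in-time targets; what the cited papers actually do is a direct kernel/frequency argument in which the $\abs{\sigma}<\tfrac12$ cutoff-multiplier bound plays the role you assign to ``unconditional square-function structure.'' Your phrasing captures the right obstruction but is a label rather than a proof, and if one tried to push a genuine Christ--Kiselev machine into $H^{\sigma}_{t}$ one would have to build the necessary discretization from scratch. Second, for (iii) the norm equivalence $\norm{g}_{H^{\sigma}}\simeq\norm{g}_{L^{2}}+\norm{\partial_{t}g}_{H^{\sigma-1}}$ is clean on $\mathbb{R}$ but the theorem's norm is the restriction norm $H_{t}^{(2s+1)/4}(0,T)$, for which one must either work with an extension of $z(x,\cdot)$ off $(0,T)$ or argue via equivalent interior characterizations; this is where extra $T$-dependence sneaks in, and it is slightly more delicate than ``$\sqrt{T}$ from Cauchy--Schwarz on the $L^{2}_{t}$ piece.'' Also note your $T$-dependence in (iii) comes both from that $L_{t}^{2}(0,T)$ step and from the $\langle T\rangle^{1/4}$ already present when you re-invoke (ii) at level $s-2$. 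With those caveats, the skeleton you propose is the right one, and the reduction $\partial_{x}^{2}z=$ Duhamel of $\partial_{x}^{2}f$ that lets you reuse (ii) in the window $-\tfrac32<s-2<\tfrac12$ is a correct and economical observation.
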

Regarding the last term in \eqref{reunify2} obtained through the Fokas method, we know the following result for $B=\gamma_0$ (Dirichlet b.c.):
\begin{thm}[\cite{FHM17}]\label{ibvpthm}
  Suppose $s>\frac{1}{2}$ and $h\in H_t^{\frac{2s+1}{4}}(\mathbb{R})$ with $supp\,h \subset [0,T')$ so that it satisfies necessary compatibility conditions. Then, $u$ given by \eqref{uform_dir} belongs to $C([0,T'];H_x^s(\mathbb{R_+}))\cap C(\mathbb{R}_+; H_t^{\frac{2s+1}{4}}(0,T'))$ such that \begin{itemize}
         \item[(i)] $|u|_{C([0,T'];H_x^s(\mathbb{R_+}))}\lesssim |h|_{H_t^{\frac{2s+1}{4}}(\mathbb{R})}$,
         \item[(ii)] $\displaystyle\sup_{x\ge 0} |u(x)|_{H_t^{\frac{2s+1}{4}}(0,T')}\lesssim |h|_{H_t^{\frac{2s+1}{4}}(\mathbb{R})},$
       \end{itemize} where constants of inequalities depend on $s$.
\end{thm}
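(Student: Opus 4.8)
The plan is to exploit the geometry of the contour $\partial D^+$ --- the boundary of the open first quadrant $\{\Re k>0,\ \Im k>0\}$ --- and to split the representation \eqref{uform} (the Dirichlet case) into the contribution $u_1$ of the ray along $\mathbb{R}$ and the contribution $u_2$ of the ray along $i\mathbb{R}$. On $\partial D^+$ one has $\Re(-ik^2)=0$, so $|e^{ikx-ik^2t}|\le 1$ there for $x\ge 0$ while it grows in the interior of $D^+$; hence no further deformation is possible and this is exactly the right decomposition. As a preliminary step I would reduce, by density, to $h$ smooth and supported in a compact subset of $(0,T')$: for such $h$, integration by parts in $s$ in $\tilde h(k^2,T')=\int_0^{T'}e^{ik^2s}h(s)\,ds$ produces rapid decay of the integrand along each ray, so the splitting of the contour integral and all substitutions below are legitimate, and $\tilde h(k^2,T')=\hat h(-k^2)$ since $\supp h\subset[0,T')$. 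The general case then follows from the resulting a priori inequalities and the density of such $h$ in $\{h\in H_t^{(2s+1)/4}(\mathbb{R}):\supp h\subset[0,T')\}$; note that this support condition together with $h\in H_t^{(2s+1)/4}$ already encodes the ``necessary compatibility conditions'' through the vanishing at $t=0$ forced by the trace characterization of $H^\sigma$-functions supported in $[0,\infty)$.

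For $u_1$: parametrizing the real ray by $k\in(0,\infty)$, it equals (up to a fixed constant) $\int_0^\infty e^{ikx-ik^2t}k\,\hat h(-k^2)\,dk$. The key observation --- the ``time-to-space switch'' --- is that setting $\widehat{G}(k):=c\,k\,\hat h(-k^2)\mathbf{1}_{\{k>0\}}$ makes $u_1(x,t)=\big(e^{-tP}G\big)(x)$ by the Fourier-multiplier representation \eqref{vform}, i.e.\ the real-ray part of the Fokas formula is literally the free Schrödinger evolution of a function frequency-supported on $(0,\infty)$. A one-line Plancherel computation with the change of variables $\tau=k^2$ then gives $\|G\|_{H_x^s(\mathbb{R})}\lesssim\|h\|_{H_t^{(2s+1)/4}(\mathbb{R})}$ for every $s>\tfrac12$ --- the extra factor $k$ converts the weight $\langle k\rangle^{2s}k^2\,dk$ at high frequency into the $H_t^{(2s+1)/4}$-weight $\tau^{(2s+1)/2}\,d\tau$, and the low-frequency remainder is harmless because $\hat h\in L^\infty_{\mathrm{loc}}$ with $\|\hat h\|_{L^\infty}\lesssim_{T'}\|h\|_{L^2}$ by compact support. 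Feeding this into Theorem \ref{Cauchythm} yields at once the bounds (i) and (ii) of the theorem for $u_1$ (and, gratis, the Strichartz bound of Theorem \ref{Cauchythm}(iii), which the paper's main results will need).

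For $u_2$: substituting $k=i\beta$, $\beta\in(0,\infty)$, it equals (up to a fixed constant) $\int_0^\infty \beta\,e^{-\beta x}e^{i\beta^2t}\,\hat h(\beta^2)\,d\beta$, now carrying an exponentially decaying --- rather than oscillatory --- profile in $x$. For the boundary trace estimate, the change of variables $\tau=\beta^2$ identifies the $t$-Fourier transform of $u_2(x,\cdot)$ with $c\,e^{-\sqrt\tau\,x}\hat h(\tau)\mathbf{1}_{\{\tau>0\}}$; since $0\le e^{-\sqrt\tau x}\le 1$ for $x,\tau\ge0$, Plancherel gives $\|u_2(x,\cdot)\|_{H_t^{(2s+1)/4}}\le\|h\|_{H_t^{(2s+1)/4}}$ uniformly in $x\ge0$. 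For the interior estimate I would fix $t$ and use the classical $L^2(0,\infty)$-boundedness of the Laplace transform $g\mapsto\int_0^\infty e^{-\beta x}g(\beta)\,d\beta$, together with $\partial_x^j\int_0^\infty e^{-\beta x}g(\beta)\,d\beta=\int_0^\infty e^{-\beta x}(-\beta)^j g(\beta)\,d\beta$, to bound $\|u_2(\cdot,t)\|_{H^j_x(\mathbb{R_+})}$ by $\big\|\langle\beta\rangle^j\beta\,\hat h(\beta^2)\big\|_{L^2_\beta(0,\infty)}\sim\|h\|_{H_t^{(2j+1)/4}}$ for integer $j$, then interpolate to real $s>\tfrac12$. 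Continuity of $u_1$ and $u_2$ in the stated variables follows from the Cauchy theory and dominated convergence, respectively.

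The anticipated obstacles are bookkeeping rather than conceptual. The first is making the contour splitting and the two substitutions rigorous for general $h$: this is precisely why the reduction to smooth compactly supported $h$ and the density argument come first, and why one must verify that the conditionally convergent real-ray integral is genuinely \emph{identified} with $e^{-tP}G$ via the $L^2$-theory of the Schrödinger group (Theorem \ref{Cauchythm}), not merely formally. The second is the fractional-$s$ form of the Laplace-transform estimate for $u_2$, which needs a little care since $\|\cdot\|_{H_x^s(\mathbb{R_+})}$ is a restriction norm; handling it by interpolation (or by a direct argument exploiting that $x\mapsto e^{-\sqrt\tau x}$ has uniformly bounded variation) is the one spot requiring attention. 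Finally, it is worth emphasizing that no dispersive or decay estimate enters: the content of Theorem \ref{ibvpthm} is that it reduces to the free Schrödinger estimates already recorded in Theorem \ref{Cauchythm} together with elementary transform bounds, with the genuine oscillatory-integral input reserved for the admissible pairs $(\lambda,r)\ne(\infty,2)$ treated later.
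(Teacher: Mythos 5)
The paper does not actually prove Theorem \ref{ibvpthm}; it imports it from \cite{FHM17}. So your proposal is a fresh argument rather than something to match line-by-line, and what you have done, in effect, is to observe that the machinery the paper develops later in Section \ref{pf1} for the Strichartz pairs---the splitting of $\partial D^+$ into its real and imaginary rays, the ``time-to-space switch'' that identifies the real-ray piece with $e^{-tP}G|_{\mathbb{R}_+}$ for a $G$ frequency-supported on $(0,\infty)$, the change of variables $\tau=k^2$ in the $H^s$-norm computation (the paper's \eqref{H1hrel}, \eqref{H2hrel}), and the Hardy $L^2$-boundedness of the Laplace transform for the imaginary-ray piece (the paper's \eqref{L2estu1})---already reproves the $(\lambda,r)=(\infty,2)$ endpoint that the paper simply cites. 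That is a sound and genuinely illuminating way to present the result, since it shows the new content of Theorem \ref{mainthm} is confined to $(\lambda,r)\neq(\infty,2)$. (You swap the labels $u_1,u_2$ relative to the paper's \eqref{decompu}, but that is cosmetic.)

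One step needs repair. For the trace estimate (ii) on the real-ray piece you invoke Theorem \ref{Cauchythm}-(ii) as a black box, but that estimate carries a $\langle T\rangle^{1/4}$ prefactor, which would leak a $T'$-dependence into (ii) that Theorem \ref{ibvpthm} does not have (the paper's convention, visible from the explicit $\langle T\rangle^{1/4}$ and ``depend only on $s$ except\dots'' phrasing elsewhere, is that $T$-dependence is written when present). The factor in Theorem \ref{Cauchythm}-(ii) is genuinely there for general $H^s$ data because of low frequencies, so you cannot simply drop it; but in your situation it disappears because $\widehat G(k)=c\,k\,\widehat h(-k^2)\mathbf 1_{\{k>0\}}$ vanishes linearly at $k=0$. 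Concretely, the $t$-Fourier transform of $e^{-tP}G(x,\cdot)$ is supported in $\{\tau<0\}$ and equals $c'\,e^{i\sqrt{-\tau}\,x}\,\widehat G(\sqrt{-\tau})/\sqrt{-\tau}$; the $1/\sqrt{-\tau}$ singularity is exactly cancelled by the factor $k$ in $\widehat G$, and a direct Plancherel computation (the same $\tau=k^2$ change of variables you already perform for the interior norm) yields $\sup_{x}\|e^{-tP}G(x,\cdot)\|_{H_t^{(2s+1)/4}(\mathbb{R})}\lesssim\|h\|_{H_t^{(2s+1)/4}(\mathbb{R})}$ with no $T'$. So you should do the low-frequency bookkeeping by hand here rather than cite Theorem \ref{Cauchythm}-(ii). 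Everything else---the density and compatibility reduction, the identity $\tilde h(k^2,T')=\widehat h(-k^2)$, the Laplace-transform bound commuting with $\partial_x^j$ and the interpolation to fractional $s$, and the uniform-in-$x$ bound $|e^{-\sqrt{\tau}x}|\le 1$ for the imaginary-ray trace---is correct.
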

\begin{rem}
The estimates in Theorem \ref{Cauchythm}-(i), (iii), Theorem \ref{NonhomCauchythm}-(i), (iv), and Theorem \ref{ibvpthm}-(i)  are referred to as space estimates, and the estimates in Theorem \ref{Cauchythm}-(ii), Theorem \ref{NonhomCauchythm}-(ii), (iii), and  Theorem \ref{ibvpthm}-(ii) are time estimates. The indices $\lambda$ and $r$ in the spaces $L^\lambda_t(0,T';H^{s,r}_x(\mathbb{R}_+)$ are not random, and, as noted in Theorem \ref{Cauchythm} and Theorem \ref{NonhomCauchythm}, they must obey an admissibility condition intrinsic to the underlying differential operator.  The space estimates in Theorem \ref{Cauchythm}-(i), Theorem \ref{NonhomCauchythm}-(i), and Theorem \ref{ibvpthm}-(i) are essentially a special case of Strichartz estimates, namely they are $L^\lambda_t(0,T';H^{s,r}_x(\mathbb{R}_+)$ type estimates with $\lambda=\infty$ and $r=2$.
\end{rem}

\subsection{Boundary type Strichartz estimates for Fokas formulas}\label{statebc}
In this section, we state the Strichartz estimates for Fokas method formulas representing solutions of the simplified ibvp.
\begin{thm}[Dirichlet b.c.]\label{mainthm}
	Let $s\ge 0$, $h\in H_t^{\frac{2s+1}{4}}(\mathbb{R})$ with $supp\,h \subset [0,T')$ satisfying necessary compatibility conditions, and $(\lambda,r)$ be Schrödinger admissible. Then, the Fokas method based formula \begin{equation}u(x,t)\equiv \frac{1}{\pi}\int_{\partial D^+}e^{ikx-ik^2t}k\tilde{h}(k^2,T')dk,\end{equation} defines a function $u\in C([0,T'];H_x^s(\mathbb{R}_+))$ that satisfies the Strichartz estimate \begin{equation}\label{StrEst01}
		|u|_{L_t^\lambda(0,T'; H_x^{s,r}(\mathbb{R}_+))}\lesssim |h|_{H_t^{\frac{2s+1}{4}}(\mathbb{R})},
	\end{equation} where the constant of the inequality depends on $s$.
\end{thm}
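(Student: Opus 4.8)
The plan is to split $u$ according to the geometry of the contour: the boundary $\partial D^+$ of $D^+=\{k\in\mathbb C_+:\Re(ik^2)<0\}$ is the union of the positive real axis $R_1$ and the positive imaginary axis $R_2$, and the two resulting pieces obey entirely different mechanisms. First I would reduce, by density in the (compatibility-constrained) subspace of $H_t^{\frac{2s+1}{4}}(\mathbb R)$, to $h\in C_c^\infty((0,T'))$; repeated integration by parts in $s$ then gives $|\tilde h(k^2,T')|\lesssim_N|k|^{-N}$ uniformly on $\partial D^+$ for every $N$, so the two ray-integrals $u=u_1+u_2$, $u_j(x,t)=\frac1\pi\int_{R_j}e^{ikx-ik^2t}\,k\,\tilde h(k^2,T')\,dk$, converge absolutely for $x\ge0$, all interchanges below are legitimate, and \eqref{StrEst01} passes to general $h$ by a limiting argument.

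\emph{The piece $u_1$: a time-to-space switch.} Comparing with \eqref{vform} one sees immediately that
$$u_1(x,t)=\frac1\pi\int_0^\infty e^{ikx-ik^2t}\,k\,\tilde h(k^2,T')\,dk=\pi\,(e^{-tP}\phi)(x),\qquad x\in\mathbb R,$$
where $\phi$ is the spatial profile defined by $\widehat\phi(k)=\tfrac2\pi\,k\,\tilde h(k^2,T')\,\mathbf 1_{\{k>0\}}$: the \emph{time}-transform $\tilde h(\cdot,T')$ of the boundary datum is simply re-read as a function of the \emph{space} frequency $k$. Restricting to $\mathbb R_+$ and applying Theorem \ref{Cauchythm}(iii) (whose constant does not depend on the time interval) gives $|u_1|_{L_t^\lambda(0,T';W_x^{s,r}(\mathbb R_+))}\le\pi\,|e^{-tP}\phi|_{L_t^\lambda(0,T';W_x^{s,r}(\mathbb R))}\lesssim|\phi|_{H_x^s(\mathbb R)}$, and the substitution $\beta=k^2$ together with the elementary bound $(1+\beta)^s\sqrt\beta\lesssim(1+\beta)^{s+\frac12}\simeq(1+\beta^2)^{\frac{2s+1}{4}}$ yields
$$|\phi|_{H_x^s(\mathbb R)}^2\simeq\int_0^\infty(1+\beta)^s\sqrt\beta\,|\tilde h(\beta,T')|^2\,d\beta\lesssim\int_{-\infty}^{0}(1+\xi^2)^{\frac{2s+1}{4}}|\widehat h(\xi)|^2\,d\xi\le|h|_{H_t^{\frac{2s+1}{4}}(\mathbb R)}^2,$$
since $\tilde h(\beta,T')=\widehat h(-\beta)$. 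Thus $u_1$ captures exactly the negative-frequency half of $h$, and its membership in $C([0,T];H^s(\mathbb R_+))$ is inherited from Theorem \ref{Cauchythm}.

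\emph{The piece $u_2$: an oscillatory integral.} Parametrizing $R_2$ by $k=i\mu$, $\mu>0$, recasts $u_2$ as a Laplace transform in $x$ of an oscillatory profile in $\mu$,
$$u_2(x,t)=c\int_0^\infty \mu\,e^{-\mu x}\,e^{i\mu^2 t}\,\tilde h(-\mu^2,T')\,d\mu,\qquad x\ge0,$$
with complex exponent $-\mu x+i\mu^2 t$ having no critical point for real $(x,t)$. For the admissible pair $(\lambda,r)=(\infty,2)$ the right tool is not Minkowski (too lossy on $R_2$) but the $L^2(0,\infty)$-boundedness of the Laplace transform — equivalently Hilbert's inequality $\int_0^\infty\!\int_0^\infty\frac{|G(\mu)G(\nu)|}{\mu+\nu}\,d\mu\,d\nu\le\pi\,\|G\|_{L^2(0,\infty)}^2$ — used in its $H^s$-weighted form via $\partial_x\mathcal L G=\mathcal L(-\mu G)$ and interpolation; this gives
$$\sup_t|u_2(\cdot,t)|_{H^s(\mathbb R_+)}\lesssim\Big(\int_0^\infty(1+\mu^2)^s\mu^2\,|\tilde h(-\mu^2,T')|^2\,d\mu\Big)^{1/2}\overset{\beta=\mu^2}{\lesssim}|h|_{H_t^{\frac{2s+1}{4}}(\mathbb R)},$$
already improving Theorem \ref{ibvpthm} from $s>\frac12$ down to $s\ge0$ and capturing the positive-frequency half of $h$. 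For a general admissible $(\lambda,r)$ I would interpolate this $L^2_x$ bound against a fixed-time dispersive estimate for the $\mu$-integral obtained from van der Corput's lemma — the phase $\mu^2t$ has second derivative $\simeq t$ and the weight $e^{-\mu x}$ is harmless for $x\ge0$ — after a Littlewood–Paley decomposition of $h$ into blocks $|\xi|\sim2^j$; the blocks are summed in $j$ using the $\ell^2$-structure encoded in $|h|_{H_t^{\frac{2s+1}{4}}}$, and the passage from the fixed-time bound to the $L^\lambda_t$-norm is the standard $TT^*$ argument with the Christ–Kiselev lemma.

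\emph{Main difficulty.} Everything concerning $u_1$, and the $r=2$ case of $u_2$, is soft. The real work is the dispersive estimate for $u_2$ when $r>2$: one must extract decay in $t$ from the oscillation $e^{i\mu^2 t}$ \emph{uniformly in} $x\ge0$ (the weight $e^{-\mu x}$ supplies no decay at $x=0$), and then reassemble the Littlewood–Paley pieces with precisely the powers of $2^j$ that recombine into $|h|_{H_t^{\frac{2s+1}{4}}(\mathbb R)}$ — in effect, verifying that the admissibility relation $\frac1\lambda+\frac1{2r}=\frac14$ dovetails with the $\frac{2s+1}{4}$ derivative count carried by the boundary datum.
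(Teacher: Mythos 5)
Your splitting of $u$ into the two rays of $\partial D^+$, the ``time-to-space switch'' for the positive real ray (recognizing it as $e^{-tP}$ applied to a profile whose Fourier transform is $2k\,\widehat h(-k^2)\mathbf 1_{k>0}$), and the $L^2_x$ bound for the imaginary ray via $L^2$-boundedness of the Laplace transform all coincide with what the paper does, up to the labels $u_1\leftrightarrow u_2$. The $H^s$-to-$H^{(2s+1)/4}$ bookkeeping via $\beta=k^2$ and $\widetilde h(\beta,T')=\widehat h(-\beta)$ is also exactly the paper's computation. So roughly half the argument is identical.

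Where you diverge — and where your proposal stays a sketch — is precisely what you flag as ``the real work'': the $r>2$ Strichartz bound for the imaginary-axis piece. You propose to Littlewood--Paley decompose $h$, prove a block-wise van der Corput decay, and reassemble with Christ--Kiselev; but you do not explain why the Laplace weight $e^{-\mu x}$ is ``harmless'' uniformly down to $x=0$, nor how the powers of $2^j$ recombine. The paper avoids all of this with one change of variables: set $\widehat H_1(k)=\tfrac1\pi k\,\widehat h(k^2)\mathbf 1_{k\ge0}$, so that the imaginary-ray contribution becomes $\int_{\mathbb R}\ell(\tau;x,t)\,H_1(\tau)\,d\tau$ with kernel $\ell(\tau;x,t)=\int_0^\infty e^{-kx+ik^2t-ik\tau}\,dk$ — exactly a free-evolution-type operator acting on the single profile $H_1$. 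The uniform-in-$x$ dispersive bound $|\ell|\lesssim|t|^{-1/2}$ (the paper's Lemma \ref{vander}) is then proved by writing $e^{-kx}$ as an amplitude, integrating by parts against the antiderivative $\Phi(k)=\int_0^k e^{i(k'^2t-k'\tau)}dk'$, and invoking Kenig's van der Corput-type lemma to bound $\Phi$ by $c|t|^{-1/2}$ at every truncation point; the key point that you do not make explicit is that $\int_0^\infty|\partial_k e^{-kx}|\,dk\le1$ uniformly in $x\ge0$, which is what makes the weight harmless. From there the paper runs the textbook dual $TT^*$ argument (Lemma \ref{auxlem}, Hardy--Littlewood--Sobolev in $t$) to get $s=0$; the case $s=1$ follows by differentiating in $x$, which merely inserts a factor $k$ into $\widehat H_1$; and general $s\ge0$ follows by interpolation and iteration. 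No Littlewood--Paley or Christ--Kiselev is needed, and indeed avoiding them is the point of the paper. Your plan would likely succeed if carried out, but it re-imports exactly the heavier machinery (à la Holmer and Bona--Sun--Zhang) that the Fokas-formula route is designed to bypass; the missing ingredient in your write-up is the reduction $u_1=\ell*H_1$ with its clean uniform kernel estimate, which is the paper's main technical contribution.

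One minor imprecision: you say the exponent $-\mu x+i\mu^2 t$ ``has no critical point for real $(x,t)$,'' but for the dispersive estimate the relevant phase is $\mu^2 t-\mu\tau$ with $\tau$ dual to $h$, and this does have a stationary point at $\mu=\tau/(2t)$; van der Corput with $\varphi''\simeq t$ still delivers $|t|^{-1/2}$, which is what the paper uses.
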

\begin{thm}[Neumann b.c.]\label{mainthm2}
	Let $s\ge 0$, $h\in H_t^{\frac{2s-1}{4}}(\mathbb{R})$ with $supp\,h \subset [0,T')$ satisfying necessary compatibility conditions, and $(\lambda,r)$ be Schrödinger admissible. Then, the function $u$ defined by the Fokas method based formula \begin{equation}u(x,t)\equiv -\frac{i}{\pi}\int_{\partial D^+}e^{ikx-ik^2t}\tilde{h}(k^2,T')dk\end{equation} satisfies the homogeneous Strichartz estimate \begin{equation}\label{StrEst02}|u|_{L_t^\lambda(0,T'; \dot{H}_x^{s,r}(\mathbb{R}_+))}\lesssim |h|_{\dot{H}_t^{\frac{2s-1}{4}}(\mathbb{R})}.\end{equation} for $s\in \mathbb{N}_0$. If $s\ge 0$, then $u\in C([0,T'];H_x^s(\mathbb{R}_+))$ and it satisfies the inhomogeneous Strichartz estimate:
	\begin{equation}\label{StrEst03}|u|_{L_t^\lambda(0,T'; H_x^{s,r}(\mathbb{R}_+))}\lesssim c_{T'}|h|_{H_t^{\frac{2s-1}{4}}(\mathbb{R})}.\end{equation} In both estimates, the constant of the inequality depends on $s$.
\end{thm}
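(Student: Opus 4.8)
The plan is to follow the scheme used for the Dirichlet case (Theorem \ref{mainthm}) and to track the one structural change: the Neumann kernel carries the weight $1$ where the Dirichlet kernel carries $k$, which is exactly what shifts the temporal index from $\tfrac{2s+1}{4}$ to $\tfrac{2s-1}{4}$ and forces the homogeneous spaces in \eqref{StrEst02} (resp.\ the $s\ge\tfrac12$ restriction and the $(1+T')$ factor in \eqref{StrEst03}). First I would parametrize the positively oriented contour $\partial D^+$ by its two rays. Since $D^+=\{\,\Re k>0,\ \Im k>0\,\}$ is the open first quadrant, $\partial D^+$ consists of the positive imaginary axis $k=i\beta$ and the positive real axis $k=\beta$ ($\beta>0$). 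Substituting these into the Neumann formula and using that $\supp h\subset[0,T')$ makes $\zeta\mapsto\tilde h(\zeta,T')$ entire (so $\tilde h(\pm\beta^2,T')=\widehat h(\pm\beta^2)$), one gets $u=u_r+u_b$ with, up to fixed constants,
\[
u_r(x,t)=\int_0^\infty e^{i\beta x-i\beta^2 t}\,\widehat h(\beta^2)\,d\beta,\qquad u_b(x,t)=\int_0^\infty e^{-\beta x+i\beta^2 t}\,\widehat h(-\beta^2)\,d\beta .
\]

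For $u_r$ I would carry out the time-to-space switch. Extending the $\beta$-integral to $\mathbb R$ exhibits $u_r(\cdot,t)=\bigl(e^{-tP}\Phi\bigr)\big|_{\mathbb R_+}$, where $\widehat\Phi(\beta)=c\,\mathbf 1_{\beta>0}\,\widehat h(\beta^2)$; thus $u_r$ is the free Schrödinger evolution of explicit spatial data manufactured from the boundary datum $h$, and Theorem \ref{Cauchythm}(iii) together with its (classical) homogeneous counterpart gives $|u_r|_{L^\lambda_t \dot W^{s,r}_x(\mathbb R_+)}\lesssim|\Phi|_{\dot H^s(\mathbb R)}$ and $|u_r|_{L^\lambda_t W^{s,r}_x(\mathbb R_+)}\lesssim|\Phi|_{H^s(\mathbb R)}$. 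The change of variables $\mu=\beta^2$ then transfers norms: $|\Phi|_{\dot H^s(\mathbb R)}^2\simeq\int_0^\infty\mu^{\,s-\frac12}|\widehat h(\mu)|^2\,d\mu\simeq|h|_{\dot H^{\frac{2s-1}{4}}_t(\mathbb R)}^2$, which is \eqref{StrEst02}. For the inhomogeneous version I would split the $\mu$-integral at $1$: on $\mu\ge1$ the weight $\langle\sqrt\mu\rangle^{2s}\mu^{-1/2}$ is controlled by $\langle\mu\rangle^{\frac{2s-1}{2}}$, while on $0<\mu<1$, using $|\widehat h(\mu)|\le|h|_{L^1_t}\lesssim(T')^{1/2}|h|_{L^2_t}$ (Cauchy--Schwarz on $\supp h$) and $|h|_{L^2_t}\le|h|_{H^{\frac{2s-1}{4}}_t}$ (valid precisely when $s\ge\tfrac12$), one gets $\int_0^1\mu^{-1/2}|\widehat h(\mu)|^2\,d\mu\lesssim T'|h|_{H^{\frac{2s-1}{4}}_t}^2$. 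This yields $\Phi\in H^s(\mathbb R)$ for $s\ge\tfrac12$, hence (by Theorem \ref{Cauchythm}) $u_r\in C([0,T];H^s(\mathbb R_+))$, and produces the $(1+T')$ factor in \eqref{StrEst03}.

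For $u_b$ I would use a handful of oscillatory- and convolution-type inequalities. The factor $e^{-\beta x}$ is the key feature: it makes the $x$-integrals over $\mathbb R_+$ converge, it localizes $u_b$ to a boundary layer, and since $\partial_x^j e^{-\beta x}=(-\beta)^j e^{-\beta x}$, fractional $x$-derivatives in the $W^{s,r}_x$ norm merely bring down powers of $\beta$, absorbed by the Jacobian of $\mu=\beta^2$ exactly as for $u_r$. Concretely, in the $\lambda=\infty$ endpoint one has $u_b(\cdot,t)$ given by a Laplace transform in $x$, so the $L^2_x(\mathbb R_+)$ bound follows from Hilbert's inequality for the kernel $(\beta+\beta')^{-1}$, uniformly in $t$, and the $\mu=\beta^2$ substitution again returns $\int_0^\infty\mu^{\,s-\frac12}|\widehat h(\mu)|^2\,d\mu\lesssim|h|^2_{\dot H^{(2s-1)/4}_t}$ --- the Neumann analogue of the $L^\infty_t H^s_x$ estimate of \cite{FHM17}. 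For $\lambda<\infty$ one needs genuine dispersion: writing the time-Fourier transform of $u_b(x,\cdot)$ as $\propto\mathbf 1_{\mu>0}\,\mu^{-1/2}e^{-\sqrt\mu\,x}\widehat h(-\mu)$ and applying a van der Corput estimate (order two, with critical point at the endpoint $\beta=0$) to the oscillatory factor $e^{i\beta^2 t}$ produces the $t^{-1/2}$-type decay that, after the usual $TT^*$/interpolation argument, matches the admissibility condition \eqref{adm}. As for $u_r$, the homogeneous bound is clean while the low-frequency piece carries the $(1+T')$ and, for $s\ge\tfrac12$, the continuity $u_b\in C([0,T];H^s(\mathbb R_+))$.

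The hard part will be this last estimate for $u_b$ in the full mixed norm $L^\lambda_t W^{s,r}_x(\mathbb R_+)$ for a general admissible pair --- in particular for $r=\infty$, $\lambda=4$ --- rather than merely in $L^\infty_t H^s_x$ as in \cite{FHM17}: one must combine the $L^r_x$ integration against the exponential weight with the $L^\lambda_t$ oscillatory bound and still land on exactly the admissible exponents, uniformly down to $s=0$ on the homogeneous scale, while keeping all low-frequency contributions under control via $\supp h\subset[0,T')$ and finiteness of $T'$. Once that is in hand, adding $u_r$ and $u_b$ and restricting to $x\ge0$, $t\in(0,T')$ gives \eqref{StrEst02} and \eqref{StrEst03}, and the time-continuity claim for $s\ge\tfrac12$ follows from the corresponding bounds by a standard density argument in $H^{\frac{2s-1}{4}}_t(\mathbb R)$, as in \cite{FHM17}.
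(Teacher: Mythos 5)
Your decomposition of the Neumann formula into real-axis and imaginary-axis contributions, the time-to-space switch turning the real-axis piece into the restriction of a free Schr\"odinger evolution with data $\Phi$ manufactured from $\widehat h$, and the change of variables $\mu=\beta^2$ producing the index $\tfrac{2s-1}{4}$ all coincide with the paper's argument. For the piece you flag as hard ($u_b$), there is in fact no new difficulty: the paper simply reruns Section~\ref{pf1} with the weight $k$ deleted from $\widehat H_1$ --- Lemma~\ref{vander} gives the uniform $|t|^{-1/2}$ kernel bound, the $L^2_x(\mathbb R_+)$ bound is the $L^2$-boundedness of the Laplace transform, Riesz--Thorin interpolates, and Lemma~\ref{auxlem} runs the $TT^*$/duality step for every admissible $(\lambda,r)$; your ``Hilbert's inequality for $(\beta+\beta')^{-1}$'' and ``van der Corput of order two'' are the same ingredients, modulo a small slip ($\tilde h(\zeta,T')=\widehat h(-\zeta)$, not $\widehat h(\zeta)$, which swaps the arguments $\pm\beta^2$ between your two pieces, harmlessly; and the stationary point of $\beta^2t-\beta\tau$ is at $\beta=\tau/(2t)$, not at the endpoint).

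Where you genuinely diverge is in the inhomogeneous estimate \eqref{StrEst03}. You split the $\mu$-integral at $\mu=1$ and use $|\widehat h(\mu)|\le|h|_{L^1_t}\lesssim(T')^{1/2}|h|_{L^2_t}\lesssim(T')^{1/2}|h|_{H^{(2s-1)/4}_t}$ --- the last step being exactly where $s\ge\tfrac12$ enters --- to kill the $\mu^{-1/2}$ singularity at low frequency. The paper instead replaces $h$ by a mean-zero compactly supported extension $h_e$ (relying tacitly on causality/uniqueness so that $u$ on $[0,T')$ is unchanged), forms the antiderivative $H=\int_{-\infty}^{\cdot}h_e\in H_t^{(2s+3)/4}$ with $|H|_{H^{(2s+3)/4}}\lesssim(1+T')|h|_{H^{(2s-1)/4}}$, and uses $|\tau||\widehat H(\tau)|=|\widehat{h_e}(\tau)|$ to trade $|\tau|^{-1/2}|\widehat{h_e}|^2$ for $|\tau|^{3/2}|\widehat H|^2$, removing the singularity at the origin. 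Both routes are correct; yours is more elementary, avoids the extension construction of \cite{Batal16} and the causality argument, and in fact yields the marginally sharper factor $(1+T')^{1/2}$ in place of $(1+T')$.
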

\begin{rem}
	The Strichartz estimates in Theorem \ref{mainthm2} for the Neumann problem are new to the best of our knowledge.  Note that the constant of the inhomogeneous estimate in Theorem \ref{mainthm2} depends on $T'$ while the estimate in Theorem \ref{mainthm} is independent of $T'$.  However, it is implied by the proof that the dependence of the estimate on $T'$ in Theorem \ref{mainthm2} is nice in the sense that if one studies the corresponding nonlinear problem via contraction it will not cause any issues whatsoever.
\end{rem}
\begin{rem}
Note that inhomogeneous Strichartz estimates in Theorem \ref{mainthm2} cover both the high and low regularity settings.
\end{rem}
\subsection{Proof of Theorem \ref{mainthm}}\label{pf1}

\subsubsection{Splitting the solution of the ibvp}

In order to prove the Strichartz estimates, we first split the ibvp solution $u$ in two parts by using the definition and relevant parametrization of the boundary of $D^+$.  So, we have
\begin{equation}\label{decompu}
	\begin{split}
		u(x,t) & = \frac{1}{\pi}\int_{\partial D^+}e^{ikx-ik^2t}k\tilde{h}(k^2,T')dk\\
		&= -\frac{1}{\pi}\int_\infty^0e^{-kx+ik^2t}k\hat{h}(k^2)dk+\frac{1}{\pi}\int_{0}^\infty e^{ikx-ik^2t}k\hat{h}(-k^2)dk\\
		&\equiv  u_1(x,t)+u_2(x,t).
	\end{split}
\end{equation}

Sometimes, we will write $\mathcal{T}_{B,i}(t)h$ to denote $u_i(\cdot,t)$, $i=1,2$.

Note that in the above decomposition, we in addition used the fact that $\tilde{h}(k^2,T')=\hat{h}(-k^2)$ which follows from the support condition $supp\,h \subset [0,T')$.  This relation is of particular importance for relating the estimates in the next two sections to the Sobolev norm of the boundary input.

\subsubsection{Analysis on the imaginary axis: oscillatory kernel} In this section, we will prove Strichartz estimates for $u_1$. To this end, we set a function $H_1$ which is defined to be the inverse Fourier transform of the function below:
\begin{equation}\label{H1hat}
	\hat{H}_1(k)\equiv \left\{
	\begin{array}{ll}
		\frac{1}{\pi}k\hat{h}(k^2), & \hbox{if $k\ge 0$;} \\
		0, & \hbox{otherwise}.
	\end{array}
	\right.
\end{equation}
Then, upon changing the order of integrals, we can represent $u_1$ as
\begin{equation}\label{rewriteu1}
	\begin{split}
		u_1(x,t)&=-\frac{1}{\pi}\int_\infty^0e^{-kx+ik^2t}k\hat{h}(k^2)dk = \int_{-\infty}^{\infty} e^{-kx+ik^2t}\hat{H}_1(k)dk\\
		&=\lim_{b\rightarrow \infty}\int_{0}^{b}e^{-kx+ik^2t}\int_{-\infty}^\infty e^{-ik\tau}H_1(\tau)d\tau dk\\
		&=\lim_{b\rightarrow \infty}\int_{-\infty}^{\infty}\ell(\tau;x,t,b)H_1(\tau)d\tau,
	\end{split}
\end{equation} where
\begin{equation}\label{kernel}
	\ell(\tau;x,t,b)=\int_{0}^b e^{-kx+ik^2t-ik\tau}dk=\int_{0}^b e^{i\phi(k;\tau,t)}\psi(k;x)dk.
\end{equation} In the oscillatory integral \eqref{kernel}, $\psi(k;x)\equiv e^{-kx}$ is the amplitude function and $$\phi(k;\tau,t)\equiv k^2t-{k}\tau=t\varphi(k)-k\tau$$ is the phase function with $\varphi(k)=k^2$.

\begin{defn}The function $\ell$ in \eqref{rewriteu1} will be referred to as the \emph{kernel of the representation}.
\end{defn}

We first recall the following lemma from the oscillatory integral theory:

\begin{lem}[\cite{Ken91}]\label{Kenlem} Let $I(\tau,t,k)=\int_0^{k}e^{i(t\varphi(k')-k'\tau)}dk'$ with $\varphi(k')={(k')}^2$. Then $$|I(\tau,t,k)|\le c_\varphi |t|^{-1/2}, \tau,t,k\in \mathbb{R}$$ where $c_\varphi>0$ is a constant independent of $\tau,t,k$.
\end{lem}

Now, we can state the following decay estimate for the kernel of the representation:
\begin{lem}\label{vander}The kernel defined by \eqref{kernel} satisfies the following dispersive estimate:
	$$\left|\ell(\tau;x,t,b)\right|\le \frac{c}{\sqrt{|t|}},\quad t\neq 0, \text{ uniformly in }x,b\in\mathbb{R}_+, \tau\in\mathbb{R}.$$
\end{lem}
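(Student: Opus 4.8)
The plan is to reduce the estimate for $\ell(\tau;x,t,b)$ directly to Lemma \ref{Kenlem}, absorbing the amplitude $\psi(k;x)=e^{-kx}$ through an integration-by-parts argument in the oscillatory-integral parameter. First I would fix $x\ge 0$, $t\neq 0$, $\tau\in\mathbb{R}$, $b>0$, and write $\ell(\tau;x,t,b)=\int_0^b e^{i\phi(k;\tau,t)}e^{-kx}\,dk$. Introduce the antiderivative $I(\tau,t,k)=\int_0^k e^{i\phi(k';\tau,t)}\,dk'$, which is exactly the object controlled by Lemma \ref{Kenlem}: $|I(\tau,t,k)|\le c_\varphi|t|^{-1/2}$ uniformly in $\tau,t,k$. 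The key observation is that the amplitude $e^{-kx}$ is, for each fixed $x\ge 0$, a nonnegative, monotone nonincreasing function of $k$ on $[0,b]$ of total variation $\le 1$; this is what makes the argument clean and uniform in $x$ and $b$.

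The main step is then a single integration by parts: since $\partial_k I(\tau,t,k)=e^{i\phi(k;\tau,t)}$, we have
\begin{equation*}
\ell(\tau;x,t,b)=\int_0^b e^{-kx}\,\partial_k I(\tau,t,k)\,dk
= \bigl[e^{-kx}I(\tau,t,k)\bigr]_{k=0}^{k=b} + x\int_0^b e^{-kx}I(\tau,t,k)\,dk.
\end{equation*}
The boundary term at $k=0$ vanishes because $I(\tau,t,0)=0$, and at $k=b$ it is bounded by $e^{-bx}|I(\tau,t,b)|\le c_\varphi|t|^{-1/2}$. For the remaining integral, apply the uniform bound on $I$ to get
\begin{equation*}
\Bigl| x\int_0^b e^{-kx}I(\tau,t,k)\,dk\Bigr| \le c_\varphi|t|^{-1/2}\; x\int_0^\infty e^{-kx}\,dk = c_\varphi|t|^{-1/2},
\end{equation*}
where the last equality holds for $x>0$ and the bound is trivially $0$ when $x=0$. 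Adding the two contributions yields $|\ell(\tau;x,t,b)|\le 2c_\varphi|t|^{-1/2}$, which is the claimed estimate with $c=2c_\varphi$, and the bound is manifestly uniform in $x,b\in\mathbb{R}_+$ and $\tau\in\mathbb{R}$.

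I do not expect a serious obstacle here; the content of the lemma is essentially that the van der Corput–type bound of Lemma \ref{Kenlem} survives multiplication by the exponentially decaying amplitude, and the one-line integration by parts above (equivalently, an Abel summation / second–mean–value-theorem argument exploiting the monotonicity of $e^{-kx}$) is exactly the mechanism that transfers the bound. The only points requiring a little care are the legitimacy of the integration by parts (immediate, since $I$ is $C^1$ in $k$ and $e^{-kx}$ is smooth) and the treatment of the degenerate case $x=0$, where the amplitude is constant and $\ell$ reduces to $I(\tau,t,b)$ itself, so the estimate follows from Lemma \ref{Kenlem} directly. One should also note that the constant $c_\varphi$ from Lemma \ref{Kenlem} is independent of all parameters, so no hidden dependence on $x$, $\tau$, or $b$ creeps in.
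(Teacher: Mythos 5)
Your proof is correct and is essentially identical to the paper's argument: both integrate by parts with the antiderivative $\Phi(k;\tau,t)=\int_0^k e^{i\phi}\,dk'$ (your $I$), apply Lemma \ref{Kenlem} to bound it by $c_\varphi|t|^{-1/2}$, and use that the amplitude $e^{-kx}$ has total variation at most $1$ on $[0,b]$ uniformly in $x,b\ge 0$. The paper keeps the integral $x\int_0^b e^{-kx}\,dk=1-e^{-bx}\le 1$ where you extend to $[0,\infty)$; this is an immaterial cosmetic difference.
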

\begin{proof} We set $\Phi(k;\tau,t)\equiv \int_{0}^ke^{i\phi(k';\tau,t)}dk'$.  Then, $$\ell(\tau;x,t,b)=\int_0^b\left[\frac{d}{dk}\Phi(k;\tau,t)\right]\psi(k;x)dk.$$ Integrating in the RHS, using $\Phi(0;\tau,t)=0$, and estimating $|\ell(\tau;x,t,b)|$, we get
	$$\left|\ell(\tau;x,t,b)\right| \le |\Phi(b;\tau,t)\psi(b;x)|+\int_0^b|\Phi(k;\tau,t)|\left|\frac{d}{dk}\psi(k;x)\right|dk.$$ By Lemma \ref{Kenlem}, we have $|\Phi(k;\tau,t)|\le \frac{c}{\sqrt{|t|}}$ for all $k\in [0,b]$, where $c$ only depends on $\varphi$ and is independent of free parameters and $b$. Moreover, we observe that $|\psi(b;x)|\le 1$ uniformly in $x$ and $b$, and $$\int_{0}^b\left|\frac{d}{dk}\psi(k;x)\right|dk={x}\int_{0}^be^{-{k}x}dk=(1-e^{-{b}x})\le 1$$ uniformly in $x$ and $b$.
	Hence, the result follows.
\end{proof}
It is immediate from the definition of $u_1$ and the above lemma that
\begin{equation}\label{Linftyestu1}
	|u_1(\cdot,t)|_{L_x^\infty(\mathbb{R}_+)}\lesssim \frac{1}{\sqrt{t}}|H_1|_{L_t^1(\mathbb{R})}.
\end{equation}
The following estimate is due to the fact that the Laplace transform is a bounded operator from $L_k^2(0,\infty)$ into $L_k^2(0,\infty)$:
\begin{equation}\label{L2estu1}
	|u_1(\cdot,t)|_{L_x^2(\mathbb{R}_+)}\lesssim |H_1|_{L_t^2(\mathbb{R})}.
\end{equation}
Interpolating between \eqref{Linftyestu1} and \eqref{L2estu1} via the Riesz-Thorin Interpolation Theorem, we get
\begin{equation}\label{Lrestu1}
	|u_1(\cdot,t)|_{L_x^r(\mathbb{R}_+)}\lesssim t^{-(\frac{1}{2}-\frac{1}{r})}|H_1|_{L_t^{r'}(\mathbb{R})},  \quad 2\le r\le \infty.
\end{equation}

The above estimate plays the key role in establishing the Strichartz estimates. This is rather standard. Indeed, \eqref{Lrestu1}, the admissibility condition \eqref{adm} and the Riesz potential inequalities imply
\begin{equation}\label{est1}
	\left|\int_0^{T'}\mathcal{T}_{B,1}(t-s)\theta(s)ds\right|_{L^\lambda(0,T';L^r(\mathbb{R}_+))}\lesssim |\theta|_{L^{\lambda'}(0,T';L^{r'}(\mathbb{R}_+))}
\end{equation} for any $\theta\in L_t^{\lambda'}(0,T';L_x^{r'}(\mathbb{R}_+))$. Now, let $\psi\in C_c([0,T');\mathcal{D}(\mathbb{R}_+))$. Then,
\begin{equation}\label{iden}
	\begin{split}
		&  \left|\int_{-\infty}^\infty(\mathcal{T}_{B,1}(t)h,\psi(t))_{L^2(\mathbb{R}_+)}dt\right| \\
		=  &\left|\int_{-\infty}^\infty\int_{0}^\infty\int_{-\infty}^{\infty} e^{-kx+ik^2t}\hat{H}_1(k)dk\overline{\psi}(x,t)dxdt\right|\\
		=  &   \lim_{b\rightarrow \infty} \left|\int_{0}^b\hat{H}_1(k)\overline{\int_{0}^{T'}\int_{0}^{\infty} e^{-kx-ik^2t}\psi(x,t)dxdt}dk\right|\\
		= & \lim_{b\rightarrow \infty} \left|\int_{-\infty}^\infty {H}_1(\tau)\overline{\int_{0}^{T'}\int_{0}^{\infty}  \overline{\ell}(\tau;x,t,b)\psi(x,t)dxdt} d\tau\right|.
	\end{split}
\end{equation}

We prove an auxiliary result now.
\begin{lem}\label{auxlem} If $\psi\in C_c([0,T');\mathcal{D}(\mathbb{R}_+))$ and $I(\tau;b)=\int_{0}^{T'}\int_{0}^{\infty}  \overline{\ell}(\tau;x,t,b)\psi(x,t)dxdt$, then
	\begin{equation}\label{aux1}
		\left|I(\tau;b)\right|_{L_\tau^2(\mathbb{R})} \le c|\psi|_{L^{\lambda'}(0,T';L^{r'}(\mathbb{R}_+))}.
	\end{equation}
\end{lem}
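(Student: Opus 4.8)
The plan is to recognize that Lemma~\ref{auxlem} is essentially the dual (adjoint) version of the space-time estimate~\eqref{est1} restricted to the map $\theta\mapsto \int_0^{T'}\mathcal{T}_{B,1}(t-s)\theta(s)\,ds$. More precisely, $I(\tau;b)$ is, up to the limit $b\to\infty$, the composition of $\psi\mapsto \overline{(\mathcal{T}_{B,1}(t)h,\psi)}$-type pairing with the Laplace transform that defines $\mathcal{T}_{B,1}$, and the goal is a bound by $|\psi|_{L^{\lambda'}_t(0,T';L^{r'}_x(\mathbb{R}_+))}$. So first I would write out $\overline{\ell}(\tau;x,t,b)=\int_0^b e^{-kx-ik^2t+ik\tau}\,dk$ and interchange the $k$-integral with the $(x,t)$-integrals (legitimate since $\psi$ is compactly supported and smooth and the $k$-range is finite), obtaining
\begin{equation}\label{Iplan1}
I(\tau;b)=\int_0^b e^{ik\tau}\,\overline{G_b(k)}\,dk,\qquad G_b(k):=\int_0^{T'}\!\!\int_0^\infty e^{-kx+ik^2t}\,\overline{\psi}(x,t)\,dx\,dt \text{ for } k\in[0,b],
\end{equation}
so that $|I(\tau;b)|_{L^2_\tau(\mathbb{R})}$ becomes, by Plancherel, comparable to $|G_b|_{L^2_k(0,b)}$.

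Next I would estimate $|G_b(k)|_{L^2_k(0,\infty)}$ uniformly in $b$. The inner $x$-integral is the Laplace transform in $x$ of $\overline{\psi}(\cdot,t)$ evaluated at $k$, multiplied by $e^{ik^2t}$; integrating against $t$ and then taking the $L^2_k$ norm, one is led to a duality/$TT^*$ argument. Concretely, for fixed $b$ the map sending $\theta\in L^{\lambda'}_t L^{r'}_x$ to $t\mapsto \mathcal{T}_{B,1}(t)$ applied appropriately, and then pairing, shows that $k\mapsto G_b(k)\cdot\mathbf{1}_{[0,b]}$ is exactly (a piece of) the expression whose $L^2_k$ norm controls $\big|\int_0^{T'}\mathcal{T}_{B,1}(t-s)\psi(s)\,ds\big|$ in the sense of \eqref{est1}; indeed tracing through \eqref{iden} with $h$ replaced by the function whose one-sided Fourier (Laplace) data is $G_b$, one sees that $|G_b|_{L^2_k}^2$ equals a space-time pairing of $\psi$ against $\int \mathcal{T}_{B,1}(t-s)\psi(s)ds$, which by \eqref{est1} and Hölder is $\lesssim |\psi|_{L^{\lambda'}_tL^{r'}_x}^2$. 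Taking $b\to\infty$ and using monotone convergence (the integrand $|G_b(k)|^2\mathbf 1_{[0,b]}$ is nondecreasing in $b$ since $G_b(k)$ does not depend on $b$ for $k\le b$) yields \eqref{aux1}.

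An alternative, perhaps cleaner, route avoids re-deriving the $TT^*$ identity: observe $G_b(k)=G(k)$ is independent of $b$ for $k\le b$, so $I(\tau;b)\to I(\tau)=\int_0^\infty e^{ik\tau}\overline{G(k)}\,dk$ in a suitable sense, $|I(\tau;b)|_{L^2_\tau}=|G|_{L^2_k(0,b)}\nearrow |G|_{L^2_k(0,\infty)}$, and then I only need $|G|_{L^2_k(0,\infty)}\lesssim|\psi|_{L^{\lambda'}_tL^{r'}_x}$. This last inequality is precisely the statement dual to \eqref{Lrestu1}–\eqref{est1}: writing $\mathcal{T}_{B,1}(t)^\ast$ for the formal adjoint, $G(k)$ is the Laplace-transform data of $\int_0^{T'}\mathcal{T}_{B,1}(-t)^\ast\psi(t)\,dt$, whose $H$-type $L^2_k$ norm is bounded via the adjoint of the retarded estimate \eqref{est1} together with the $L^2_x$-boundedness of the Laplace transform used for \eqref{L2estu1}.

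The main obstacle is the rigorous handling of the $b\to\infty$ limit together with the interchange of integrals: for finite $b$ everything is a genuine Lebesgue integral, but one must check that the $L^2_\tau$-norms are controlled \emph{uniformly in} $b$ before passing to the limit, and that the candidate limiting object $I(\tau)$ is the right one. Because $G_b(k)$ does not depend on $b$ on its domain, this is really just monotone convergence of $\|G\cdot\mathbf 1_{[0,b]}\|_{L^2_k}$, so the difficulty is mostly bookkeeping; the genuinely substantive input is the duality identification that reduces \eqref{aux1} to the already-established estimate \eqref{est1}, which in turn rests on the dispersive bound \eqref{Lrestu1} from Lemma~\ref{vander}. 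I would therefore present the proof by (1) reducing via Plancherel to $|G|_{L^2_k(0,\infty)}$, (2) identifying $|G|_{L^2_k}^2$ with a space-time pairing and invoking \eqref{est1} plus Hölder, and (3) closing with monotone convergence in $b$.
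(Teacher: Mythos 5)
Your proof is correct and takes essentially the same $TT^*$ route as the paper: your Plancherel step $|I(\cdot;b)|_{L^2_\tau}=\sqrt{2\pi}\,|G_b|_{L^2_k(0,b)}$ followed by expanding $|G_b|^2_{L^2_k}$ produces exactly the paper's kernel $L(x,y,t,s;b)=2\pi\int_0^b e^{-k(x+y)-ik^2(t-s)}dk$ (the paper just obtains it directly from the $\tau$-integral producing a delta), after which the $|L|\lesssim|t-s|^{-1/2}$ decay and the HLS/Hölder machinery from the proof of \eqref{est1} close the estimate. One small expositional caveat: the final step should say ``the arguments used to prove \eqref{est1}'' rather than ``\eqref{est1} plus Hölder,'' since the relevant operator in the $TT^*$ pairing acts on functions of $y$, not on functions of $\tau$ as $\mathcal{T}_{B,1}$ does, so it is the same dispersive-plus-HLS argument rather than a literal application of \eqref{est1}; also, your monotone-convergence handling of $b\to\infty$ is cleaner than what the paper leaves implicit.
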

\begin{proof} We can rewrite the LHS of \eqref{aux1} as
	\begin{equation}
		\begin{split}
			\left|I(\tau;b)\right|_{L_\tau^2(\mathbb{R})} = \int_0^{T'}\int_0^\infty\psi(x,t)\int_0^{T'}\int_0^\infty\overline{\psi}(y,s)L(x,y,t,s;b)dydsdxdt,
		\end{split}
	\end{equation} where $L(x,y,t,s;b)=\int_{-\infty}^\infty\overline{\ell}(\tau;x,t,b){\ell}(\tau;y,s,b)d\tau.$  By using the finite line Fourier transform, we have
	\begin{equation}
		\begin{split}
			L(x,y,t,s;b)  & = \int_{0}^b\int_{-\infty}^\infty\int_{0}^b  e^{-kx-ik^2t+ik\tau}e^{-\tilde{k}y+i\tilde{k}^2s-i\tilde{k}\tau}dkd\tilde{k}d\tau\\
			& = 2\pi\int_{0}^b e^{-k(x+y)-ik^2(t-s)}dk.
		\end{split}
	\end{equation}
	By using the same arguments in Lemma \ref{vander}, we deduce that $$|L(x,y,t,s;b)|\le \frac{c}{\sqrt{|t-s|}},\quad t\neq s, \text{ uniformly in }x,y,b\in\mathbb{R}_+.$$ The above estimate and arguments used in the proof of \eqref{est1} give the result.
\end{proof}
Using Lemma \ref{auxlem}, we find that the RHS of \eqref{iden} is bounded by
\begin{equation}\label{RHSbnd}
	c|H_1|_{L^2(\mathbb{R})}|\psi|_{L_t^{\lambda'}(0,T';L_x^{r'}(\mathbb{R}_+))}.
\end{equation}
Hence, by duality we establish the case $s=0$:
\begin{equation}\label{szero}|\mathcal{T}_{B,1}(t)h|_{L_t^\lambda(0,T';L_x^r(\mathbb{R}_+))}\le c|H_1|_{L^2(\mathbb{R})}.\end{equation}
\begin{rem}\label{difrem}
	The integral representation formula on $\partial D_+$ obtained via the Fokas method has the remarkable property that one can easily differentiate with respect to $x$ and $t$.
\end{rem}
In view of Remark \ref{difrem}, differentiating $u_1$ in $x$ merely brings a factor of a scalar multiple of $k$ into the integrand. Therefore, the above arguments can be repeated for $\partial_x[\mathcal{T}_{B,1}(t)h]$, and one obtains
\begin{equation}\label{sone}|\partial_x\mathcal{T}_{B,1}(t)h|_{L_t^\lambda(0,T';L_x^r(\mathbb{R}_+))}\le c|\tilde{H}_1|_{L^2(\mathbb{R})},\end{equation} where $\tilde{H}_1(k)\equiv -k\hat{H}_1(k)$ for $k\in \mathbb{R}$. From \eqref{szero} and \eqref{sone}, we establish the $s=1$ case:
\begin{equation}\label{h1est}
	|\mathcal{T}_{B,1}(t)h|_{L_t^\lambda(0,T';H_x^{1,r}(\mathbb{R}_+))}\le c|H_1|_{H^1(\mathbb{R})}
\end{equation}

Now, we interpolate between \eqref{szero} and \eqref{h1est} and obtain
\begin{equation}\label{hsest}
	|\mathcal{T}_{B,1}(t)h|_{L_t^\lambda(0,T';H_x^{s,r}(\mathbb{R}_+))}\le c|H_1|_{H^s(\mathbb{R})}
\end{equation} for $0\le s\le 1$. Finally, we can iterate and derive the same estimate for all $s\ge 0$.

Returning to the original boundary input is easy. Namely, $H_1$ and $h$ are related via the estimate
\begin{equation}\label{H1hrel}
	\begin{split}
		&  |H_1|_{H^s(\mathbb{R})}^2 = \int_{-\infty}^\infty (1+k^2)^s|\hat{H}_1(k)|^2dk\\
		&=\frac{1}{\pi^2}\int_{0}^\infty (1+k^2)^sk^2|\hat{h}(k^2)|^2dk=\frac{1}{2\pi^2}\int_{0}^\infty (1+\tau)^s\sqrt{\tau}|\hat{h}(\tau)|^2d\tau\\
		& \le c\int_{-\infty}^\infty (1+\tau^2)^{\frac{2s+1}{4}}|\hat{h}(\tau)|^2d\tau=c|h|_{H_t^{\frac{2s+1}{4}}(\mathbb{R})}^2.
	\end{split}
\end{equation}
It follows from \eqref{hsest} and \eqref{H1hrel} that
$|u_1|_{L_t^\lambda(0,T';H_x^{s,r}(\mathbb{R}_+))}\lesssim |h|_{H_t^{\frac{2s+1}{4}}(\mathbb{R})}.$
\subsubsection{Analysis on the real axis: ibvp to Cauchy switch} Here, we set a function $H_2$ as the inverse Fourier transform of
\begin{equation}\label{H2hat}
	\hat{H}_2(k)\equiv \left\{
	\begin{array}{ll}
		2k\hat{h}(-k^2), & \hbox{if $k\ge 0$;} \\
		0, & \hbox{otherwise}.
	\end{array}
	\right.
\end{equation} Then, $u_2$ is rewritten as
\begin{equation}\label{u2rep}
	u_2(x,t)=\frac{1}{2\pi}\int_{-\infty}^{\infty}e^{ikx-ik^2t}\hat{H}_2(k)dk,
\end{equation} where $x\in \mathbb{R}_+.$
Observe that the above formula makes sense even for negative $x$. Therefore, extending \eqref{u2rep} to all $x\in\mathbb{R}$ and comparing it with \eqref{vform}, we see that $u_2$, denoted same, becomes the solution of a Cauchy problem which reads as
\begin{align}
	&\partial_t u_2+Pu_2=0, \quad (x,t)\in\mathbb{R}\times (0,T'),\label{maineqlinHC2}\\
	&u_2(x,0)=H_2(x).\label{initlinHC2}
\end{align}

Although, $H_2$ originally depends on the time variable $t$, the above trick allows us to write a Cauchy problem in which the dummy variable of $H_2$ is switched with the spatial variable $x$. In some sense, the ibvp with time dependent boundary input is translated into an initial value problem (ivp) on the whole line.  The advantage is that Strichartz estimates for the ivp are well-known. Indeed, by the Cauchy theory (see Theorem \ref{Cauchythm}), we have
\begin{equation}\label{switch1}
	|u_2|_{L_t^\lambda(0,T; H_x^{s,r}(\mathbb{R}))}\lesssim |H_2|_{H^{s}(\mathbb{R})}.
\end{equation} Note also that $H_2$ is controlled by $h$ via
\begin{equation}\label{H2hrel}
	\begin{split}
		&  |H_2|_{H^s(\mathbb{R})}^2 = \int_{-\infty}^\infty (1+k^2)^s|\hat{H}_2(k)|^2dk\\
		&=\int_{0}^\infty (1+k^2)^s4k^2|\hat{h}(-k^2)|^2dk=\frac{1}{2}\int_{0}^\infty (1+\tau)^s\sqrt{\tau}|\hat{h}(-\tau)|^2d\tau\\
		& \le c\int_{-\infty}^\infty (1+\tau^2)^{\frac{2s+1}{4}}|\hat{h}(\tau)|^2d\tau=c|h|_{H_t^{\frac{2s+1}{4}}(\mathbb{R})}^2.
	\end{split}
\end{equation}
It follows from \eqref{switch1} and \eqref{H2hrel} and restricting $u_2$ back to the half line that
$$|u_2|_{L_t^\lambda(0,T; H_x^{s,r}(\mathbb{R}_+))}\lesssim |h|_{H_t^{\frac{2s+1}{4}}(\mathbb{R})}.$$
\subsection{Proof of Theorem \ref{mainthm2}}\label{pf2}
\subsubsection{Homogeneous estimate}
The solution splits as
\begin{equation}\label{decompu20}
	\begin{split}
		u(x,t) & = -\frac{i}{\pi}\int_{\partial D^+}e^{ikx-ik^2t}\tilde{h}(k^2,T')dk\\
		&= \frac{1}{\pi}\int_\infty^0e^{-kx+ik^2t}\hat{h}(k^2)dk-\frac{i}{\pi}\int_{0}^\infty e^{ikx-ik^2t}\hat{h}(-k^2)dk\\
		&\equiv  u_1(x,t)+u_2(x,t).
	\end{split}
\end{equation}
We set \begin{equation}\label{H1hat20}
	\hat{H}_1(k)\equiv \left\{
	\begin{array}{ll}
		-\frac{1}{\pi}\hat{h}(k^2), & \hbox{if $k\ge 0$;} \\
		0, & \hbox{otherwise}
	\end{array}
	\right.
\end{equation} and
\begin{equation}\label{H1hat30}
	\hat{H}_2(k)\equiv \left\{
	\begin{array}{ll}
		-2i\hat{h}(-k^2), & \hbox{if $k\ge 0$;} \\
		0, & \hbox{otherwise}.
	\end{array}
	\right.
\end{equation}
From the arguments in the proof of Theorem \ref{mainthm} with $\hat{H}_i$, $i=1,2$ as above, we have for $s\in \mathbb{N}_0$ that
\begin{equation}\label{switch120}
	|u_i|_{L_t^\lambda(0,T; \dot{H}_x^{s,r}(\mathbb{R}_+))}\lesssim |H_i|_{\dot{H}^{s}(\mathbb{R})},\quad i=1,2.
\end{equation}
Observe that
\begin{equation}\label{H2hrelN0}
	\begin{split}
		&  |H_1|_{\dot{H}^s(\mathbb{R})}^2 = \int_{-\infty}^\infty k^{2s}|\hat{H}_1(k)|^2dk\lesssim \int_{0}^\infty k^{2s}|\hat{h}(k^2)|^2dk\\&\lesssim\int_{0}^\infty \frac{\tau^s}{\sqrt{\tau}}|\hat{h}(\tau)|^2d\tau
		\lesssim\int_{-\infty}^\infty |\tau|^{s-\frac{1}{2}}|\hat{h}(\tau)|^2d\tau=|h|_{\dot{H}_t^{\frac{2s-1}{4}}(\mathbb{R})}^2.
	\end{split}
\end{equation}
A similar estimate also holds for $H_2$.
\subsubsection{Inhomogeneous estimate} We first consider the case $s>1/2$. We define a new function $h_e$ of mean zero with the properties $h_e|_{[0,T')}=h|_{[0,T')}$, $\supp h_e\subset [0,2T'+1)$, $|h_e|_{H_t^{\frac{2s-1}{4}}(\mathbb{R})}\lesssim |h|_{H_t^{\frac{2s-1}{4}}(\mathbb{R})}$ so that $H(t)\equiv \int_{-\infty}^{t}h_e(s)ds$ has also support in $[0,2T'+1)$ with $|H|_{H_t^{\frac{2s+3}{4}}(\mathbb{R})}\lesssim (1+T')|h|_{H_t^{\frac{2s-1}{4}}(\mathbb{R})},$ see for instance \cite[Lemma 2.1]{Batal16} for such construction.  Then, we solve the Neumann problem with $h_e$ over the interval $[0,2T'+1)$.  The corresponding solution is given by
\begin{equation}\label{decompu2}
	\begin{split}
		u(x,t) & = -\frac{i}{\pi}\int_{\partial D^+}e^{ikx-ik^2t}\tilde{h}_e(k^2,2T'+1)dk\\
		&= \frac{1}{\pi}\int_\infty^0e^{-kx+ik^2t}\hat{h}_e(k^2)dk-\frac{i}{\pi}\int_{0}^\infty e^{ikx-ik^2t}\hat{h}_e(-k^2)dk\\
		&\equiv  u_1(x,t)+u_2(x,t).
	\end{split}
\end{equation}
We set \begin{equation}\label{H1hat2}
	\hat{H}_1(k)\equiv \left\{
	\begin{array}{ll}
		-\frac{1}{\pi}\hat{h}_e(k^2), & \hbox{if $k\ge 0$;} \\
		0, & \hbox{otherwise}
	\end{array}
	\right.
\end{equation} and
\begin{equation}\label{H1hat3}
	\hat{H}_2(k)\equiv \left\{
	\begin{array}{ll}
		-2i\hat{h}_e(-k^2), & \hbox{if $k\ge 0$;} \\
		0, & \hbox{otherwise}.
	\end{array}
	\right.
\end{equation}
Now, we repeat the arguments in the proof of Theorem \ref{mainthm} by taking $\hat{H}_i$, $i=1,2$ as in \eqref{H1hat2} and \eqref{H1hat3}, respectively.
Namely, we have
\begin{equation}\label{switch12}
	|u_i|_{L_t^\lambda(0,T; H_x^{s,r}(\mathbb{R}_+))}\lesssim |H_i|_{H^{s}(\mathbb{R})},\quad i=1,2.
\end{equation}
Observe that
\begin{equation}\label{H2hrelN}
	\begin{split}
		&  |H_1|_{H^s(\mathbb{R})}^2 = \int_{-\infty}^\infty (1+k^2)^s|\hat{H}_1(k)|^2dk\lesssim \int_{0}^\infty (1+k^2)^s|\hat{h}_e(-k^2)|^2dk\\&\lesssim\int_{0}^\infty \frac{(1+\tau)^s}{\sqrt{\tau}}|\hat{h}_e(-\tau)|^2d\tau
		\lesssim\int_{-\infty}^\infty (1+\tau^2)^{\frac{s}{2}}|\tau^2|^{-\frac{1}{4}}|\hat{h}_e(\tau)|^2d\tau.
	\end{split}
\end{equation}
Using $\frac{d}{dt}H=h_e \Rightarrow |\tau||\hat{H}(\tau)|=|\hat{h}_e(\tau)|$, the term at the right hand side of \eqref{H2hrelN} is estimated as
\begin{equation}\label{rhsest}
	\begin{split}\int_{-\infty}^\infty (1+\tau^2)^{\frac{s}{2}}|\tau|^{-\frac{1}{2}}|\hat{h}_e(\tau)|^2d\tau\le \int_{-\infty}^\infty (1+\tau^2)^{\frac{s}{2}}|\tau^2|^{\frac{3}{4}}|\hat{H}(\tau)|^2d\tau\\
		=|H|_{H_t^{\frac{2s+3}{4}}(\mathbb{R})}^2\lesssim (1+T')^2|h|_{H_t^{\frac{2s-1}{4}}(\mathbb{R})}^2.
	\end{split}
\end{equation} Hence, $|H_1|_{H^s(\mathbb{R})}\lesssim (1+T')|h|_{H_t^{\frac{2s-1}{4}}(\mathbb{R})}.$ A similar estimate also holds for $H_2$.\\
Next, we consider the case $s<1/2$.
For $s=0$, $\frac{2s-1}{4}=-\frac{1}{4}$. We recall the following lemma.
\begin{lem}[\cite{Holmer}]\label{hlm_lem}
	Let $\theta \in C_0^{\infty}(\Real)$ and $0\leq \alpha < 1/2$. If $h\in H^{-\alpha}$ then
	\begin{equation*}
		|\theta h|_{\dot{H}^{-\alpha}}\leq c(\theta,\alpha) | h|_{\dot{H}^{-\alpha}},
	\end{equation*}
	where the constant of inequality depends on $\alpha$ and the size of the support of $\theta$.
\end{lem}
We first prove \eqref{StrEst03} for $s=0$ in which case, $h\in H_t^{-\frac{1}{4}}$ with $\supp h\subset [0,T')$. Then by the homogeneous estimate \eqref{StrEst02} and Lemma \ref{hlm_lem} it follows that for a cutoff function $\theta$ which satisfies $\theta\equiv 1$ on $[0,T']$, we have
\begin{equation}
	|u|_{L^{\lambda}_t(0,T';L_x^r(\Real_+))}\lesssim |h|_{\dot{H_t}^{-\frac{1}{4}}(\Real)}=|\theta h|_{\dot{H_t}^{-\frac{1}{4}}(\Real)}
	\leq c_{T'} |h|_{H_t^{-\frac{1}{4}}(\Real)}.
\end{equation}
For $s=1$ we already established  inhomogeneous estimate \eqref{StrEst03} so we can interpolate to obtain the desired result for $s\in [0,1]$.

\subsection{Temporal regularity of spatial traces for Cauchy problems}
In this section, we overview two theorems regarding the time trace estimates of solutions of the homogeneous and nonhomogeneous Cauchy problems, respectively.
\begin{thm}\label{neumann_trace_homo}
	Let $s\geq 0$ and $y_0^*\in H^s_x(\Real)$. Then $v(t)=e^{-tP}y_0^*$ defines a solution to \eqref{vform} such that $\partial_xv\in C(\Real_x,H_t^{\frac{2s-1}{4}}(0,T))$ for $T\in (0,\infty]$ and
	\begin{equation}\label{hcp_test}
		\sup_{x\in \Real}|\partial_xv(x,\cdot)|_{H_t^{\frac{2s-1}{4}}(0,T)}\leq \sqrt{2}|y_0^*|_{H_x^s}.
	\end{equation}
\end{thm}
\begin{proof}
	By the solution representation of the homogeneous Cauchy problem we have
	\begin{align*}
		\partial_xv(x,t) & = \frac{i}{2\pi}\int_{\Real}e^{i\xi x-i\xi^2t}\xi \widehat{y_0^*}(\xi) d\xi \\
		& = \frac{i}{2\pi}\int_0^{\infty}e^{i\xi x-i\xi^2t}\xi \widehat{y_0^*}(\xi) d\xi +\frac{i}{2\pi}\int^0_{-\infty}e^{i\xi x-i\xi^2t}\xi \widehat{y_0^*}(\xi) d\xi \\
		& \eqqcolon I(x,t)+II(x,t).
	\end{align*}
	First we estimate $II$. By the change of variable $\xi=-\sqrt{\tau}$ we rewrite $II$ as
	\begin{equation}
		II(x,t)= \frac{i}{4\pi}\int_0^{\infty}e^{-i\sqrt{\tau}x-i\tau t}\widehat{y_0^*}(-\sqrt{\tau})d\tau.
	\end{equation}
	Then $II$ is the inverse (in time) Fourier transform of the function
	\begin{equation}
		\widehat{II}^{(t)}(x,\tau) \coloneqq \frac{i}{2}\chi_{[0,\infty)}(\tau)e^{-i\sqrt{\tau}x}\widehat{y_0^*}(-\sqrt{\tau}).
	\end{equation}
	So by the definition of the Sobolev norm,
	\begin{align}\label{b2}
		\nonumber |II(x,\cdot)|^2_{H_t^{\frac{2s-1}{4}}} &\leq \int_{\Real} (1+\tau^2)^{\frac{2s-1}{4}}|\widehat{II_1}^{(t)}(x,\tau)|^2d\tau\\ \nonumber
		& = \frac{1}{4} \int_0^{\infty} (1+\tau^2)^{\frac{2s-1}{4}}|\widehat{y_0^*}(-\sqrt{\tau})|^2d\tau\\
		& = \frac{1}{2}\int_{-\infty}^{0}(1+\xi^4)^\frac{2s-1}{4}|\widehat{y_0^*}(\xi)|^2\xi d\xi \leq \frac{1}{2}|y_0^*|^2_{H^s}.
	\end{align}
	By a similar argument, one can show that
	\begin{equation}\label{b3}
		|I(x,\cdot)|_{H_t^{\frac{2s-1}{4}}}\leq \frac{1}{\sqrt{2}}|y_0^*|_{H_x^s}.
	\end{equation}
	Then \eqref{b2} and \eqref{b3} together imply the desired result.
\end{proof}
Next, we estimate time traces of the solution of the nonhomogeneous Cauchy problem.
\begin{thm}\label{neumann_trace_inhomo}
Let $(\lambda,r)$ be Schrödinger admissible, $m\in \{0,1\}$ and $z$ be defined by \eqref{zform}. Then, the following properties hold:
\begin{enumerate}
    \item Let $f^*\in L_t^{\lambda'}\dot{H}_x^{s,r'}$, $s\in \mathbb{Z}$, then
    $\partial_x^mz\in C(\Real;\dot{H}_t^{\frac{2s+1-2m}{4}}(0,T))$
    for $T\leq \infty$ and
    \begin{equation}\label{homo_est}
        \sup_x|\partial_x^mz(x,\cdot)|_{\dot{H}_t^{\frac{2s+1-2m}{4}}(0,T)}\lesssim |f^*|_{L_t^{\lambda'}\dot{H}_x^{s,r'}}.
    \end{equation}
    \item Let $f^*\in L_t^{\lambda'}H_x^{s,r'}$, $s\in [0,\infty)$, then $\partial_x^mz\in C(\Real;H_t^{\frac{2s+1-2m}{4}}(0,T))$ for $T<\infty$ and
    \begin{equation}\label{inhomo_est}
        \sup_x|\partial_x^mz(x,\cdot)|_{H_t^{\frac{2s+1-2m}{4}}(0,T)}\lesssim (1+c_T)|f^*|_{L_t^{\lambda'}H_x^{s,r'}}
    \end{equation}
\end{enumerate}

\end{thm}
\begin{rem}
  The homogeneous and inhomogeneous time trace estimates in the above theorem can be considered as extensions of estimates in \cite{Holmer} to a larger range of $s$. Here, we obtain the estimates for a large range of $s$ by making use of the relation between the spatial and temporal fractional derivatives of the solution.  Moreover, we refrain from differentiating the solution formula in time to obtain higher order time trace estimates. This generally cause additional time trace terms appearing at the right hand side of \eqref{inhomo_est}, see for instance \cite{Bona18}. Instead, we use an iterative argument whose each step relies on boundedness of $|\partial_x^mz(x,\cdot)|_{H_t^{\frac{1}{4}}(0,T)}$.
\end{rem}

\begin{rem}
	${L_t^{\lambda'}H_x^{s,r'}}$ norms of $f^*$ at the right hand side of the estimates in Theorem \ref{neumann_trace_inhomo} can be replaced by ${L_t^{\lambda'}(0,T;H_x^{s,r'}})$ norms. The latter are more convenient for nonlinear applications.
\end{rem}

\begin{lem}[\cite{HolmerThesis, Ken93}]\label{1}
    For $f\in \mathcal{D}_{\otimes} \equiv  \big\{h\,|\,h(x,t)=\sum_{i=1}^Nh_i^1(x)h_i^2(t), h_i^1,h_i^2\in C_0^{\infty}(\Real)\big\}$,
\begin{equation}\label{sol}
    \begin{split}
        2z(x,t) &= \int_{\Real} e^{-(t-t')P}f(x,t')dt'- 2\int_{-\infty}^0 e^{-(t-t')P}f(x,t')dt'\\
        & -\frac{i}{\pi}\int_{\Real} e^{it\tau} \Big[ \lim_{\epsilon \rightarrow 0^+} \int_{\epsilon<|\xi^2+\tau|<\frac{1}{\epsilon}} e^{i x\xi}\frac{\widehat{f}(\xi,\tau)}{\xi^2+\tau} d \xi \Big] d \tau.
    \end{split}
\end{equation}
\end{lem}
\begin{proof} This lemma is an analogue of a proposition that was given by Kenig, et al. in \cite{Ken93} for the Korteweg-de Vries equation.  Its adaptation to the Schrödinger equation was given in Holmer \cite{HolmerThesis} without proof. We give a proof for completeness.  We write the last term in \eqref{sol}  as
\begin{equation}\label{last_term}
    -\frac{i}{\pi} \int_{\Real} e^{ix\xi} \lim_{\epsilon \downarrow 0} \int_{\epsilon<|\xi^2+\tau|<\frac{1}{\epsilon}} e^{it\tau}\frac{\widehat{f}(\xi,\tau) }{\xi^2+\tau} d\tau d\xi.
\end{equation}
Now by the Fourier transform of the signum function and a change of variable we have
\begin{align*}
    \int_{\epsilon<|\xi^2+\tau|<\frac{1}{\epsilon}} e^{it\tau}\frac{\widehat{f}(\xi,\tau) }{\xi^2+\tau} d\tau &= \frac{i}{2}\int_{\epsilon<|\xi^2+\tau|<\frac{1}{\epsilon}} e^{it\tau} \widehat{f}(\xi,\tau)\Big(\int_{\Real} e^{-i(\xi^2+\tau)t'} sgn(t') dt' \Big) d\tau\\
    &=  \frac{i}{2}\int_{\epsilon<|\xi^2+\tau|<\frac{1}{\epsilon}} e^{it\tau} \widehat{f}(\xi,\tau)\Big(\int_{\Real} e^{-i(\xi^2+\tau)(t-t')} sgn(t-t') dt' \Big) d\tau\\
    &=  \frac{i}{2}\int_{\epsilon<|\xi^2+\tau|<\frac{1}{\epsilon}} e^{-i\xi^2t} \widehat{f}(\xi,\tau)\Big(\int_{\Real} e^{i(\xi^2+\tau)t'} sgn(t-t') dt' \Big) d\tau\\
    &=\frac{i}{2} \int_{\Real}e^{-i\xi^2(t-t')}sgn(t-t')\int_{\epsilon<|\xi^2+\tau|<\frac{1}{\epsilon}} e^{it'\tau} \widehat{f}(\xi,\tau)d\tau dt'
\end{align*}

\begin{equation*}
    \Rightarrow \lim_{\epsilon \downarrow 0 }\int_{\epsilon<|\xi^2+\tau|<\frac{1}{\epsilon}} e^{it\tau}\frac{\widehat{f}(\xi,\tau) }{\xi^2+\tau} d\tau = \pi i \int_{\Real}e^{-i\xi^2(t-t')} sgn(t-t')\widehat{f}^{(x)}(\xi,t')dt'
\end{equation*}
and hence \eqref{last_term} is equal to
\begin{align*}
    & \int_{\Real} e^{ix\xi} \int_{\Real} e^{-i\xi^2(t-t')}sgn(t-t')\widehat{f}^{(x)}(\xi,t')d\xi dt'\\
    &= \int_{\Real} sgn(t-t')\int_{\Real} e^{ix\xi-i\xi^2(t-t')}\widehat{f}^{(x)}(\xi,t')d\xi dt'\\
    &= \int_{\Real}sgn(t-t') e^{-(t-t')P}f(\cdot,t')dt'\\
    &= 2\int_{-\infty}^t e^{-(t-t')P}f(\cdot, t')dt'-\int_{\Real}e^{-(t-t')P}f(\cdot, t')dt'\\
    &= 2\int_{-\infty}^0e^{-(t-t')P}f(\cdot,t')dt'-\int_{\Real}e^{-(t-t')P}f(\cdot,t')dt'+2z(x,t).
\end{align*}
\end{proof}

\begin{rem} $\mathcal{D}_{\otimes}$ is dense in $L_t^pL_x^p$ for any $p,q\ge 1$ \cite{Ken93}.
\end{rem}

\begin{lem}[\cite{HolmerThesis}]\label{2}
Let $x\in \Real$. Then,
\begin{equation}\label{tau_neg}
    \lim_{\epsilon \downarrow 0 }\int_{\epsilon<|\xi^2+\tau|<\frac{1}{\epsilon}}e^{ix\xi}\frac{1}{\xi^2+\tau}d\xi = -\frac{\pi \sin(|x|\sqrt{-\tau})}{\sqrt{-\tau}}, \tau <0
\end{equation} and
\begin{equation}\label{tau_pos}
    \int_{\Real}e^{ix\xi} \frac{1}{\xi^2+\tau} d\xi =\pi \frac{e^{-|x|\sqrt{\tau}}}{\sqrt{\tau}}, \tau>0.
\end{equation}
\end{lem}
\begin{proof} It is stated in \cite{HolmerThesis} (for $\tau=1$) that two identities given above  can be proven  by making use of partial fraction decomposition and delta function, respectively. We give an alternate and complex analytic proof here. Let us first consider the case $x>0$ and $\tau<0$. Set the complex valued function $f(z)=e^{ixz}\frac{1}{z^2+\tau}$
over the contour $\Gamma_{\epsilon}$ shown in Figure \ref{gamma}.  By the residue theorem $\int_{\Gamma_{\epsilon}}f=0$ and since $x>0$, Jordan's Lemma implies $\lim_{\epsilon \downarrow 0}\int_{\gamma_{\epsilon}}f=0$.
We also have
\begin{equation}
    \int_{D_{\epsilon}}f= -i\pi Res(f,\sqrt{-\tau})=-\frac{i\pi}{2}\frac{e^{ix\sqrt{-\tau}}}{\sqrt{-\tau}}\text{ and }
    \int_{C_{\epsilon}}f= -i\pi Res(f,-\sqrt{-\tau})=\frac{i\pi}{2}\frac{e^{-ix\sqrt{-\tau}}}{\sqrt{-\tau}}.
\end{equation}
Thus for $x>0$ we obtain $$\lim_{\epsilon \downarrow 0 }\int_{\epsilon<|\xi^2+\tau|<\frac{1}{\epsilon}}e^{ix\xi}\frac{1}{\xi^2+\tau}d\xi = -\frac{\pi \sin(x\sqrt{-\tau})}{\sqrt{-\tau}}.$$
For $x<0$, we can repeat the same argument for the contour $\Gamma'_{\epsilon}$ in Figure \ref{0gammaprime} to obtain the same expression, proving \eqref{tau_neg}. \eqref{tau_pos} can be shown by using similar complex analytic arguments.

\begin{figure}[H]
	\centering
	\begin{minipage}{.5\textwidth}
		\centering
		\includegraphics[width=\linewidth]{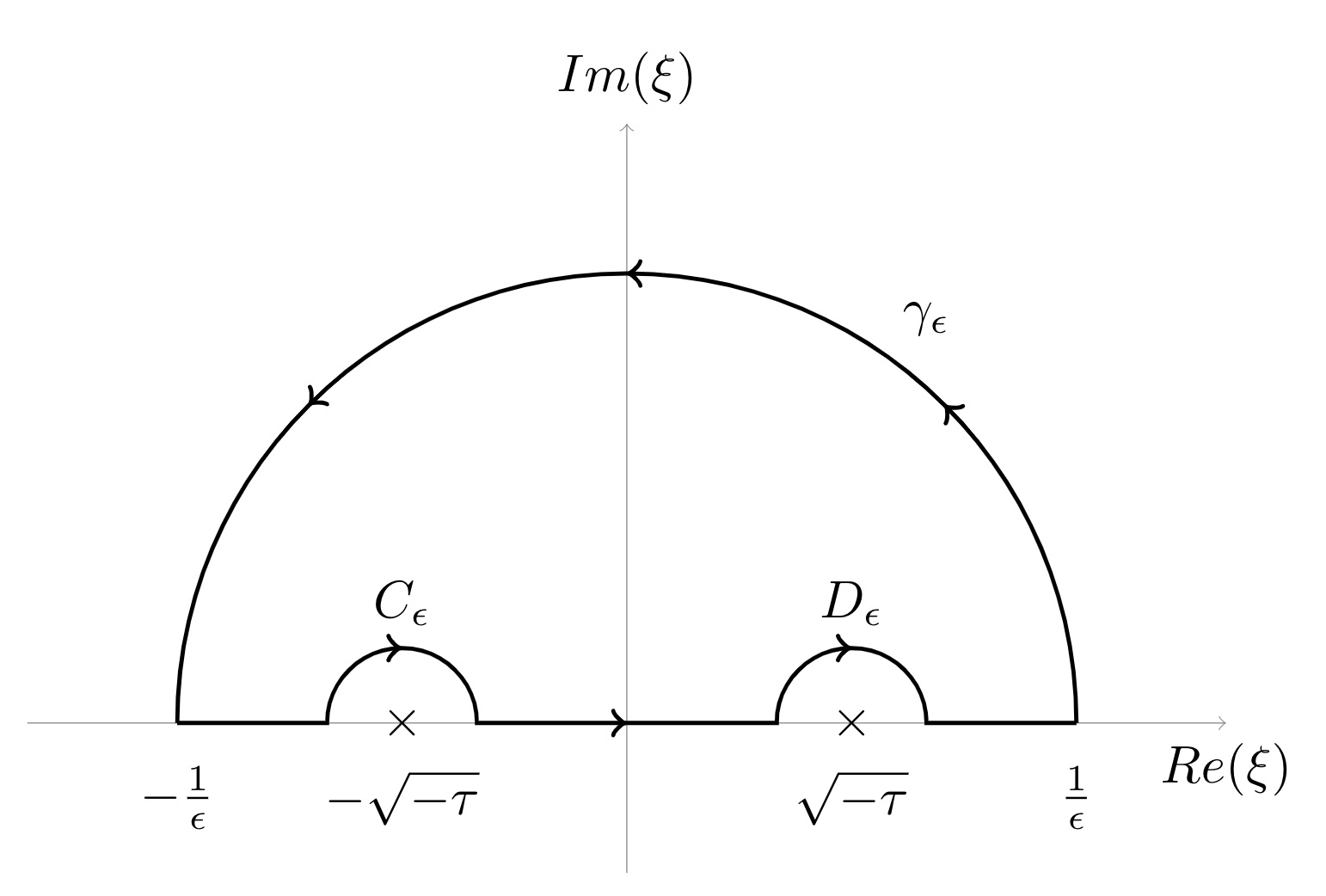}
\captionof{figure}{The contour $\Gamma_{\epsilon}$}
		\label{gamma}
	\end{minipage}%
	\begin{minipage}{.5\textwidth}
		\centering
		\includegraphics[width=\linewidth]{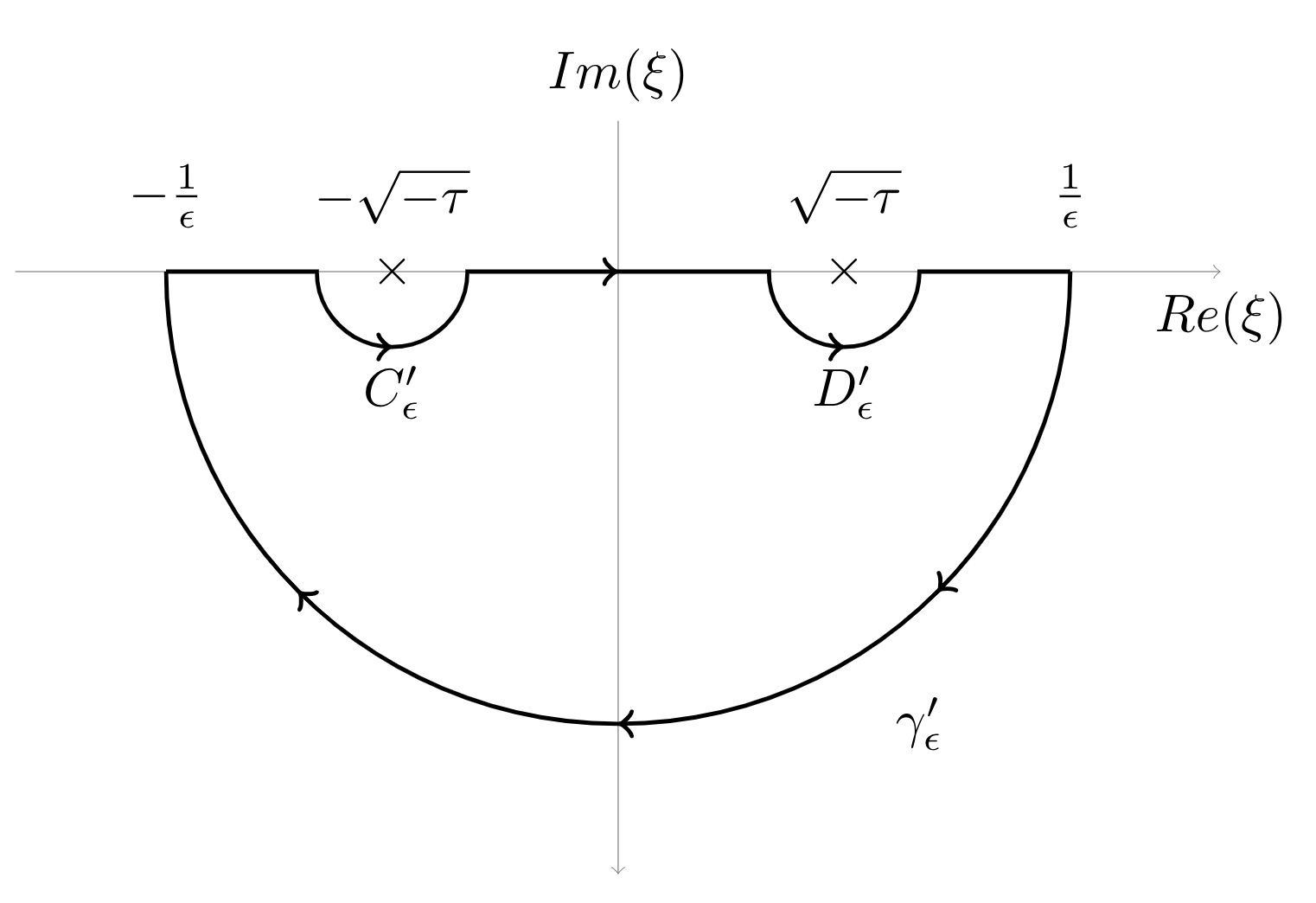}
		\captionof{figure}{The contour $\Gamma'_{\epsilon}$}
		\label{0gammaprime}
	\end{minipage}
\end{figure}
\end{proof}
The following lemma will be useful.
\begin{lem}[\cite{Ben-Artzi}]\label{3}
Suppose $\alpha \in \Real$. Then for $t\neq 0$
\begin{equation}
    \Big| \int_0^{\infty} e^{i\alpha \xi \pm i\xi^2t}d\xi \Big|\lesssim \sqrt{|t|},
\end{equation} where the constant of the inequality is independent of $\alpha$ and $t$.
\end{lem}
Now, we are ready to prove Theorem \ref{neumann_trace_inhomo}.
\begin{proof}[Proof of Theorem \ref{neumann_trace_inhomo}]
The proof of this theorem is largely due to Holmer \cite{HolmerThesis}, here we only mention  necessary modifications to allow a larger range of $s$. In particular, we make an explicit use of the relation between the spatial and temporal derivatives of the solution. Lemma \ref{1} gives
\begin{equation}
    \begin{split}
    D_t^{\frac{2s+1}{4}}z(x_0,t) &= \frac{1}{2} D_t^{\frac{2s+1}{4}}\int_{\Real} e^{-(t-t')P}f(x_0,t')dt'
    - D_t^{\frac{2s+1}{4}}\int_{\infty}^0e^{-(t-t')P}f(x_0,t')dt'\\
    &-\frac{i}{2\pi} \lim_{\epsilon \downarrow 0} \int \int_{\epsilon<|\xi^2+\tau|<\frac{1}{\epsilon}} e^{it\tau+i\xi x_0}|\tau|^{\frac{2s+1}{4}}\frac{\widehat{f}(\xi,\tau)}{\xi^2+\tau}d\xi d\tau\\
    & = I + II + III.
    \end{split}
\end{equation}
In order to estimate $I$, let $g\in L^2$ s.t. $|g|_{L^2}\leq 1$, then
\begin{align*}
|I^{x_0}|_{L_t^2} & \eqsim \sup_{|g|_{L^2}\leq 1} \Big| \int_{\Real} D_t^{\frac{2s+1}{4}}\Big( \int_{\Real}e^{-(t-t')P}f(x_0,t')dt' \Big) \overline{g(t)}dt \Big|\\
& = \sup_{|g|_{L^2}\leq 1} \Big| \int_{\Real} D_t^{\frac{s}{2}}\Big( \int_{\Real}e^{-(t-t')P}f(x_0,t')dt' \Big) \overline{D_t^{\frac{1}{4}}g(t)}dt \Big|.
\end{align*}
By definition of $e^{-(t-t')P}$ and Fourier transform characterization of fractional derivative, one has
    \begin{align*}
        & D_t^{\frac{s}{2}}\int_{t'}e^{-(t-t')P}f(x_0,t')dt'  \eqsim  \int_{t'} e^{-(t-t')P}D_x^sf(x,t') dt'.
    \end{align*}
Rewriting $I^{x_0}$ in view of the above identity and using arguments similar to those in  \cite{HolmerThesis}, one can prove that  $|I^{x_0}|_{L^2_t}\lesssim |H^{x_0}|_{L^2}$, where $H^{x_0}(x) =  \int_{\Real}e^{t'P}D_x^sf(x_0-x,t')dt'.$
Furthermore, $|H^{x_0}|_{L^2}\lesssim |f|_{L_t^{\lambda'}\dot{H}_x^{s,r'}}$.
By replacing the integrals where $t'\in \Real$ with integrals where $t'\in (0,\infty)$ in the analysis of $I$, it also follows that $|II^{x_0}|_{L^2_t}\lesssim |f|_{L_t^{\lambda'}\dot{H}_x^{s,r'}}.$
Next, we prove that $|III|_{L_t^2}\lesssim |f|_{L_t^{\lambda'}\dot{H}_x^{s,r'}}$. We start with the case $(\lambda,r)=(\infty,2)$. By Plancherel's theorem and Minkowski's inequality we have $$|III|_{L^2_t}\leq \int_t(F_1(t)+F_2(t))dt,$$ where
\begin{equation*}
    III_1(t)= \Bigg( \int_{\tau} |\tau|^{s+\frac{1}{2}} \Big| \int_{-\infty}^0  e^{ix\xi} \frac{\widehat{f}(\xi,t)}{\xi^2+\tau} d\xi \Big|^2 \Bigg)^{\frac{1}{2}}\text{ and }
    III_2(t) = \Bigg( \int_{\tau} |\tau|^{s+\frac{1}{2}} \Big| \int_0^{\infty}  e^{ix\xi} \frac{\widehat{f}(\xi,t)}{\xi^2+\tau} d\xi \Big|^2 \Bigg)^{\frac{1}{2}}.
\end{equation*}
Utilizing the fact that $|\tau|^{s+\frac{1}{2}}$ is an $A_2$ weight (see e.g., \cite[Chapter 5]{SteinHA}), a change of variable and Plancherel's theorem, one gets
\begin{equation*}
    III_i(t) \lesssim |f(\cdot,t)|_{\dot{H}_x^s}, i=1,2.
\end{equation*}
Now we prove the case $(\lambda,r)= (4,\infty) $ for $s\in \mathbb{Z}$. Let $g$ denote the function with the Fourier transform
$\widehat{g}(\xi,\tau)=\frac{1}{\xi^2+\tau}.$ We rewrite $III$ as
\begin{equation*}
    \begin{split}
        III & = \int_{\tau}e^{it\tau}|\tau|^{\frac{2s+1}{4}} \Bigg( \int_{\xi} e^{ix_0\xi} \mathcal{F}_x \Big[ \widehat{f}^{(t)}(x,\tau) *\widehat{g}^{(t)}(x,\tau) \Big] (x_0) d\xi  \Bigg) d\tau\\
        & = 2\pi \int_{\tau}e^{it\tau}|\tau|^{\frac{2s+1}{4}} \Bigg( \int_{x}  \widehat{f}^{(t)}(x_0-x,\tau) \widehat{g}^{(t)}(x,\tau)  dx  \Bigg) d\tau.\\
    \end{split}
\end{equation*}
Now notice, from Lemma \ref{2} that for $\tau\neq 0$ we have $\partial_x^s\widehat{g}(x_0,\tau) \eqsim |\tau|^{\frac{s}{2}}\widehat{g}(x_0,\tau)$
whenever $s\in \mathbb{N}$. So by integration by parts we have
\begin{equation*}
    III \eqsim \int_{\tau}e^{it\tau}|\tau|^{\frac{1}{4}} \Bigg( \int_{x}  \widehat{\partial_x^sf}^{(t)}(x_0-x,\tau) \widehat{g}^{(t)}(x,\tau)  dx  \Bigg).
\end{equation*}
As for $s\in \mathbb{Z}_-$, the $s$-th anti-derivative $\widehat{G}^{(t)}(x_0,\tau)$ of $\widehat{g}(x_0,\tau) $ satisfies
$$\widehat{g}(x_0,\tau) \eqsim \partial_x^{-s} \widehat{G}^{(t)}(x_0,\tau) = |\tau|^{-\frac{s}{2}} \widehat{G}^{(t)}(x_0,\tau).$$
So for $s\in \mathbb{Z}_-$ we can write $III$ as
\begin{equation}
    \begin{split}
        III & \eqsim  \int_{\tau}e^{it\tau}|\tau|^{\frac{1}{4}} \Bigg( \int_{x}  \widehat{f}^{(t)}(x_0-x,\tau) \widehat{G}^{(t)}(x,\tau)  dx  \Bigg)\\
        & = \int_{\tau}e^{it\tau}|\tau|^{\frac{1}{4}} \Bigg( \int_{x}  |\tau|^s\widehat{f}^{(t)}(x_0-x,\tau) \widehat{g}^{(t)}(x,\tau)  dx  \Bigg)\\
        & = \int_{\tau}e^{it\tau}|\tau|^{\frac{1}{4}} \Bigg( \int_{x}  \widehat{D^s_xf}^{(t)}(x_0-x,\tau) \widehat{g}^{(t)}(x,\tau)  dx  \Bigg). \label{bu}
    \end{split}
\end{equation}
It follows that, for $s\in \mathbb{Z}$, $III$ can be expressed as \eqref{bu}, where $D^s$ is in the sense of fractional derivative for negative $s$ and in the sense of usual derivative for nonnegative $s$. By Plancherel's theorem,
\begin{equation*}
    \begin{split}
        |III|^2_{L^2_t} \eqsim |\widehat{III}|^2_{L^2_{\tau}} & \lesssim \int_0^{\infty} \sqrt{\tau} \Bigg| \int_{x}  \widehat{D^s_xf}^{(t)}(x_0-x,\tau) \widehat{g}^{(t)}(x,\tau)  dx \Bigg|^2 d\tau \\
        & + \int_{-\infty}^0 \sqrt{\tau} \Bigg| \int_{x}  \widehat{D^s_xf}^{(t)}(x_0-x,\tau) \widehat{g}^{(t)}(x,\tau)  dx \Bigg|^2 d\tau\\ & = A_1+A_2.
    \end{split}
\end{equation*}
One can rewrite $A_1$ as
\begin{equation*}
\begin{split}
    A_1  \eqsim \int_{x,t,x',t'}K(x,t,x',t')D^s_xf(x_0-x,t)\overline{D_{x'}^sf(x_0-x',t')} dxdtdx'dt',
\end{split}
\end{equation*}
where
$K(x,t,x',t')=\int_0^{\infty} e^{-(|x|+|x'|)\xi} e^{i(t'-t)\xi^2} d\xi.$
A similar argument also applies to $A_2$. We have $K(x,t,x',t')\lesssim \frac{1}{\sqrt{|t-t'|}}$,
which implies through Hardy-Littlewood-Sobolev inequality that
$       |III|_{L_t^2}^2 \lesssim |f|_{L^{\frac{4}{3}}_t\dot{H}^{s,1}_x}^2.$ Now for fixed $s\in \mathbb{Z}$ we can interpolate between the pairs $(\lambda',r')=(1,2)$ and $(\frac{4}{3},1)$ to reach the desired estimate for $III$, which proves \eqref{homo_est} for the case $m=0$. For $m=1$, we can differentiate in $x$ and repeat the same arguments to obtain
$$|\partial_xz(x,\cdot)|_{\dot{H}_t^{\frac{2s-1}{4}}(0,T)}\lesssim  |\partial_x^mf|_{L_t^{\lambda'}\dot{H}_x^{s-1,r'}} \leq |f|_{L_t^{\lambda'}\dot{H}_x^{s,r'}}$$
for the same admissible pairs and $s\in \mathbb{Z}$. Finally we can interpolate over $(\lambda,r)$ for fixed $s\in \mathbb{Z}$ proving \eqref{homo_est}. If $s\in \mathbb{N}_0$ and $s<m$ (which is only possible if $s=0$ and $m=1$), then $\frac{2s+1-2m}{4}=-\frac{1}{4}<0$ and hence
\begin{equation*}
        |\partial_x^mz(x,\cdot)|_{H_t^{\frac{2s+1-2m}{4}}(0,T)}\leq |\partial_x^mz(x,\cdot)|_{\dot{H}_t^{\frac{2s+1-2m}{4}}(0,T)}\lesssim|f|_{L_t^{\lambda'}H^{s,r'}_x}.
\end{equation*}
Now suppose $s\in \mathbb{N}_0$ and $s\geq m$. Then $\frac{2s+1-2m}{4}\geq \frac{1}{4}$. Let $\theta_T$ be a smooth cut-off function such that $\theta_T|_{[0,T]}\equiv 1$. Then we have
\begin{equation*}
    \begin{split}
        |\partial_x^mz(x,\cdot)|_{H_t^{\frac{2s+1-2m}{4}}(0,T)} \leq & |\partial_x^mz(x,\cdot)|_{\dot{H}_t^{\frac{2s+1-2m}{4}}(0,T)}+ |\partial_x^mz(x,\cdot)|_{L^2}\\
        \lesssim & |f|_{L_t^{\lambda'}\dot{H}_x^{s,r}}+ |\theta_T\partial_x^mz(x,\cdot)|_{H_t^{\frac{1}{4}}(0,T)}\\
        \leq & |f|_{L_t^{\lambda'}\dot{H}_x^{s,r}}+c_T |\partial_x^mz(x,\cdot)|_{\dot{H}_t^{\frac{1}{4}}(0,T)}\\
        \lesssim & |f|_{L_t^{\lambda'}\dot{H}_x^{s,r}}+ c_T|f|_{L_t^{\lambda'}\dot{H}_x^{m,r}}\\
        \leq & (1+c_T)|f|_{L_t^{\lambda'}H_x^{s,r}}.
    \end{split}
\end{equation*}
Notice that the above argument need not hold when $T=\infty$.
We have proved \eqref{inhomo_est} for all $s\in \mathbb{N}_0$, so we can interpolate and extend the estimate to all $s\geq0$.
\end{proof}
\begin{rem}
The constant $c_T$ in \ref{inhomo_est} is an increasing function of $T$. If $T$ is assumed to be bounded by some $M>0$, one can omit the term $c_T$ by taking $\theta_M$ instead of $\theta_T$.
\end{rem}


\section{Nonlinear Applications}\label{Apps}
We will prove local well-posedness of the NLS and the coupled system of NLS equations on the halfline utilizing trace and Strichartz estimates given in previous sections. The local wellposedness of the Dirichlet and Neumann problems for the single NLS were studied before in the high regularity setting in \cite{FHM17} and \cite{Him19}, respectively, via Fokas method based formulas. Dirichlet problem was studied in the low regularity setting with other methods (see e.g., see \cite{Bona18}, \cite{Holmer}).  Therefore, here we will prove the local well-posedness of only the Neumann problem in the low regularity setting $s\in [0,\frac{1}{2})$ using Theorem \ref{mainthm2}.  Then we move on to the coupled system of NLS equations and establish its wellposedness, too.

One should not expect a local wellposesness result for negative indices in light of the theory of Cauchy problems.  Indeed, let $u$ be a solution of the NLS on the real line:
\begin{equation}\label{NLS}
  u_t+Pu+f(u)=0.
\end{equation}
Then, $u_{\epsilon}(x,t)\doteq \epsilon^{-\frac{2}{p}}u(\epsilon^{-1}x,\epsilon^{-2}t)$ is also a solution of the same eqution. Moreover, $u$ solves \eqref{NLS} on $(0,T)$  iff $u_\epsilon$ solves \eqref{NLS} on $(0,\epsilon^2 T)$. The corresponding initial data satisfy $\displaystyle|u_\epsilon(0)|_{\dot{H}_x^s}=\epsilon^{\frac{1}{2}-\frac{2}{p}-s}|u(0)|_{\dot{H}_x^s}.$ It is clear that if $s<s^*\doteq \frac{1}{2}-\frac{2}{p}$, then both $|u_\epsilon(0)|_{\dot{H}_x^s}$ and the life span of $u_\epsilon$ tends to zero as $\epsilon\rightarrow 0^+$. This suggests that the problem may be locally illposed for $s<s^*$ and locally wellposed otherwise.  Illposedness is easier to establish for $0\le s <s^*$ whenever $s^*> 0$ ($L^2$-\emph{supercritical}) or for $s <s^*=0$ ($L^2$-\emph{critical}). For the focusing problems, in the case $s^*\ge 0$, one can construct solutions that blow up in given arbitrarily small time.   However, if $p< 4$, then $s^*< 0$, in which case an explicit blow-up solution cannot be constructed. So, the question is what is the range of $s$ for which local wellposedness fails when $s^*<0$ ($L^2$-\emph{subcritical}).  It was shown in \cite{Tao} that the solution operator is not uniformly continuous for $s<\max(0,s^*)$.  This motivates us to consider the wellposedness problem for $s\ge \max(0,s^*)$ also in the present context of an ibvp.
\subsection{NLS with Neumann b.c.}
In this section we prove the local well posedness of the following Neumann problem for $0\leq s<\frac{1}{2}$.
\begin{subequations}\label{n_problem}
\begin{empheq}{align}
&iy_{t}+y_{xx}=\kappa|y|^py\eqqcolon f(y), \quad (x,t)\in\mathbb{R}_+\times (0,T),\\
&y(x,0)=y_0(x)\in H_x^s(\Real_+),\\
&y_x(0,t)=g(t)\in H_t^{\frac{2s-1}{4}}(0,T)
\end{empheq}
\end{subequations}
 where $\kappa\in \mathbb{C}$ and $p\leq \frac{4}{1-2s}$. Note that we have no compatibility conditions when $s\in [0,1/2)$ as traces are not defined.
\begin{thm}\label{neumannthm}
Let $T>0$, $s\in [0,\frac{1}{2})$, $0<p\leq \frac{4}{1-2s}$, $y_0\in H_x^s(\Real_+)$ and $g\in H_t^{\frac{2s-1}{4}}(0,T)$. Define Schrödinger admissable pair $\lambda = \frac{4(p+2)}{p(1-2s)},\  r = \frac{p+2}{1+sp}$. We assume $y_0$ is small if $p=4/(1-2s).$ Then \ref{n_problem} has the following local well-posedness properties:
\begin{itemize}
    \item[(i)] Local existence and uniqueness. There is a unique local solution $y\in L_t^{\lambda}(0,T_0;H_x^{s,r}(\Real_+))\cap C([0,T_0];H_x^s(\Real_+))$ for some $T_0\in (0,T]$.
    \item[(ii)] Continuous dependence. For any bounded subset $B$ of $H_x^s(\Real_+) \times H_t^{\frac{2s-1}{4}}(0,T)$ there is $T_0>0$ such that the map $(y_0,g)\in B \mapsto y\in C([0,T_0];H_x^s(\Real_+))$ is Lipschitz continuous.
    \item[(iii)] Blow-up alternative. Let $S$ be the set of $T_0\in (0,T]$ such that there is a unique local solution in $L^{\lambda}(0,T_0;H^{s,r}(\Real_+))$. Then
    $T_{max}\equiv \sup_{T_0\in S}T_0<T \Rightarrow \lim_{t\rightarrow T_{max}}|y(t)|_{H^s(\Real_+)}=\infty.$
\end{itemize}
\end{thm}
\begin{proof}
\textbf{Existence.}
A solution of \ref{n_problem} is a fixed point of the operator
\begin{align}\Theta[y](t)\coloneqq\, &\, v|_{\mathbb{R}_+\times (0,T_0)}+z|_{\mathbb{R}_+\times (0,T_0)}+u|_{(0,T_0)}\\
\equiv &\left.e^{-tP}y_0^*\right|_{\mathbb{R}_+\times (0,T_0)}+\left.\int_0^te^{-(t-t')P}{f^*}(y)dt'\right|_{\mathbb{R}_+\times (0,T_0)}+\left.\mathcal{T}_B(t)h\right|_{(0,T_0)},\end{align} where $y_0^*$ - spatially defined on $\mathbb{R}$ - is an extension of $y_0$ and similarly $f^*(y)$ is an extension of $f(y)$ (can be simply taken as $f(y^*)$ where $y^*$ is an extension of $y$ - this is justified later through the estimates of the nonlinear term).  Here, by abuse of notation we use $(\cdot)^*$ to denote all extensions. Moreover, these extensions correspond to fixed bounded regularity preserving extension operators. Therefore, in estimates one can switch between norms of $y_0, f$ and $y_0^*, f^*$, respectively.

Here, $h\in H_t^{\frac{2s-1}{4}}(\Real)$ with $\supp{h}\subset [0,T')$ for some $T'>T$ that is an extension (in the distributional sense for small $s$) of the boundary input $g-\partial_x v(0,\cdot)-\partial_x z(0,\cdot)$ which is defined on $[0,T]$ such that
$$|h|_{H_t^{\frac{2s-1}{4}}(\Real)}\lesssim |g-\partial_x v(0,\cdot)-\partial_x z(0,\cdot)|_{H_t^{\frac{2s-1}{4}}(0,T)}.$$
Indeed one can see that such $h$ exists by first taking an extension to $\Real$ and then multiplying this extension by a suitable cut-off function (see e.g. \cite{Agr} for construction of such extension).
Now consider the space $$X_{T_0}\coloneqq \{ y\in L_t^{\lambda}(0,T_0;H_x^{s,r}(\Real_+)): |y|_{L_t^{\lambda}(0,T_0;H_x^{s,r}(\Real_+))}\leq R \}$$
with the metric $d(y_1,y_2)=|y_1-y_2|_{L_t^{\lambda}(0,T_0;L_x^r(\Real_+))}.$
\begin{rem}
$(X_{T_0},d)$ is a complete metric space, letting us to use Banach's fixed point theorem. Although $X_{T_0}$ is not a linear space we will still write $|y|_{X_{T_0}}\coloneqq |y|_{L_t^{\lambda}(0,T_0;H_x^{s,r}(\Real_+))}$ to shorten the notation.
\end{rem}
We claim that there are $R,{T_0}>0$ such that $\Theta:X_{T_0}\rightarrow X_{T_0}$ is a contraction. We will first show that $\Theta(X_{T_0})\subseteq X_{T_0}$ for some $R,{T_0}>0$. Suppose $y\in X_{T_0}$. Then, by the linear theory
\begin{align*}
    |\Theta y|_{X_{T_0}}     \lesssim & |y_0|_{H^s_x(\mathbb{R}_+)}+ |f(y^*)|_{L_t^{\lambda'}(0,T;H_x^{s,r'}(\Real))}+|u|_{X_{T_0}}.
\end{align*}
\par Again by the linear theory,
\begin{align*}
    |u|_{X_{T_0}} &\lesssim |h|_{H_t^{\frac{2s-1}{4}}(\Real)}\leq |g-\partial_x v(0,\cdot)-\partial_x z(0,\cdot)|_{H_t^{\frac{2s-1}{4}}(0,T_0)} \\
    &\leq |g|_{H_t^{\frac{2s-1}{4}}(0,{T_0})}+|y_0|_{H^s_x(\mathbb{R}_+)}+|f(y^*)|_{L_t^{\lambda'}(0,{T_0};H_x^{s,r'}(\Real))}.
\end{align*}
So we have
\begin{equation}
    |\Theta y|_{X_{T_0}}\leq c_{T_0} (|y_0|_{H^s_x(\mathbb{R}_+)}+|f(y^*)|_{L_t^{\lambda'}(0,{T_0};H_x^{s,r'}(\Real_+))}+|g|_{H_t^{\frac{2s-1}{4}}(0,{T_0})})
\end{equation}
where $c_{T_0}$ is a constant dependent on ${T_0}$. Now take $R=2c_{T_0}|y_0|_{H^s_x(\mathbb{R}_+)}$. Let $(\lambda,r)$ be the particular admissible pair
$\lambda \coloneqq \frac{4(p+2)}{p(1-2s)}, r \coloneqq \frac{p+2}{1+sp}.$
Then by the fractional Leibniz and chain rules:
\begin{equation*}
    |f(y^*)|_{L_t^{\lambda'}(0,{T_0};\dot{H}_x^{s,r'}(\Real))}\leq T_0^{\theta}|y^*|^{p+1}_{L_t^{\lambda}(0,{T_0};\dot{H}_x^{s,r}(\Real))}\leq T_0^{\theta}|y|^{p+1}_{X_{T_0}},
\end{equation*}
\begin{equation*}
    |f(y^*)|_{L_t^{\lambda'}(0,{T_0};L^{r'}(\Real))}\leq T_0^{\theta} |y^*|^{p}_{L_t^{\lambda}(0,{T_0};\dot{H}_x^{s,r}(\Real))}|y^*|_{L_t^{\lambda}(0,{T_0};L_x^{r}(\Real))}\leq T_0^{\theta}|y|^{p+1}_{X_{T_0}}
\end{equation*}
where $\theta \coloneqq 1-\frac{p(1-2s)}{4}$. We have
\begin{equation*}
    |\Theta y|_{X_{T_0}} \leq \frac{R}{2}+ c_{T_0}\big(T_0^{\theta}R^{p+1}+|g|_{H_t^{\frac{2s-1}{4}}(0,{T_0})}\big).
\end{equation*}
From the linear theory, $c_{T_0}$ is non-increasing as ${T_0}$ gets smaller. Thus we can take ${T_0}$ small such that
\begin{equation*}
    c_{T_0}\big(T_0^{\theta}R^{p+1}+|g|_{H_t^{\frac{2s-1}{4}}(0,{T_0})}\big)\leq \frac{R}{2}
\end{equation*}
so that we have $\Theta y\in X_{T_0}$. Now we have to show that $\Theta$ is a contraction. Take $y_1,y_2\in X_{T_0}$. Then
\begin{align}
    d(\Theta y_1,\Theta y_2)& \lesssim  |f(y_1^*)-f(y_2^*)|_{L_t^{\lambda'}(0,{T_0};H_x^{s,r'}(\Real))}\nonumber \\
    & \lesssim T_0^{\theta}\Big(|y_1^*|^p_{L_t^{\lambda}(0,{T_0};\dot{H}_x^{s,r}(\Real))} + |y_2^*|^p_{L_t^{\lambda}(0,{T_0};\dot{H}_x^{s,r}(\Real))}\Big)  |y_1^*-y_2^*|_{L_t^{\lambda}(0,{T_0};L_x^r(\Real))}\nonumber \\
    & \leq T_0^{\theta}\big(|y_1|^p_{X_{T_0}}+|y_2|^p_{X_{T_0}}\big)d(y_1,y_2)\label{uniqueness_arg}.
\end{align}
So taking ${T_0}$ small enough we can make $\Theta$ a contraction. Then by Banach fixed point theorem $\Theta$ has a unique fixed point $y\in X_{T_0}$. From the linear theory, it also follows that $y\in C([0,{T_0}];H^{s}_x(\Real_+))$.
\begin{rem}
Note that the above arguments apply in the subcritical case $\theta>0$. For the critical case $\theta=0$ the term $T_0^{\theta}$ on the RHS is lost but one can still argue in a similar manner with the additional assumption that initial data is small.
\end{rem}
\textbf{Uniqueness.}
Let $y_1,y_2\in L^{\lambda}_t([0,{T_0}],H_x^{s,r}(\Real_+))$ be two solutions. Then by \eqref{uniqueness_arg} we have
\begin{equation*}
    d(y_1,y_2)=d(\Theta y_1,\Theta y_2)\lesssim T_0^{\theta}(|y_1|^p_{X_{T_0}}+|y_2|^p_{X_{T_0}})d(y_1,y_2)
\end{equation*}
Hence $T_0$ can be taken small enough to guarantee uniqueness for $\theta>0$. For the critical case, a similar argument applies for small initial data.
\\
\textbf{Continuous Dependence.} Let $B$ be bounded subset of $H_x^s(\Real_+)\times H_t^{\frac{2s-1}{4}}(0,T_0)$ and let $(y_1,g_1),(y_2,g_2)\in B$ with corresponding solutions $w_1$ and $w_2$ in $X_{T_0}$, respectively. Then $w\equiv w_1-w_2$ is a solution to the following problem
\begin{subequations}
\begin{empheq}{align}
&iw_{t}+w_{xx}=f(w_1)-f(w_2), \quad (x,t)\in\mathbb{R}_+\times (0,T_0),\\
&w(x,0)= (y_1-y_2)(x)\in H_x^s(\Real_+),\\
&w_x(0,t)= (g_1-g_2)(t)\in H_t^{\frac{2s-1}{4}}(0,T_0).
\end{empheq}
\end{subequations}
Then we have
\begin{align*}
    |w|_{X_{T_0}} \lesssim &|y_1^*-y_2^*|_{H_x^s(\Real_+)} + |f(w_1^*)-f(w_2^*)|_{L_t^{\lambda'}(0,{T_0};H_x^{s,r'}(\Real))} + c_{T_0}|h_1-h_2|_{H_t^{\frac{2s-1}{4}}(\Real)}\\
     \lesssim &|y_1-y_2|_{H^s(\Real_+)} + T_0^{\theta}\Big(|w_1|^p_{X_{T_0}}+{|w_2|}^p_{X_{T_0}}\Big)|w_1-w_2|_{X_{T_0}}+ c_{T_0}|g_1-g_2|_{H_t^{\frac{2s-1}{4}}(0,{T_0})}.
\end{align*}
Then taking ${T_0}$ small enough (provided $\theta>0$) we obtain
$
    |w_1-w_2|_{Y_{T_0}}\lesssim |(y_1,g_1)-(y_2,g_2)|_{B}.
$A similar argument also applies to $\theta=0$ case by assuming initial data are small. \\
\textbf{Blowup Alternative.} Let $S$ be the set of $T_0\in (0,T]$ such that there is a unique local solution in $L_t^{\lambda}(0,T_0;H_x^{s,r}(\Real_+))$. We claim that if $T_{max}\coloneqq \sup_{T_0\in S}T_0<T$ then $|y(t)|_{H_x^s(\Real_+)}\rightarrow \infty$ as $t\rightarrow T_{max}$. Suppose for contradiction that there is a sequence $(t_n)\subset S$ such that $t_n\rightarrow T$ and $|y(t_n)|_{H_x^s(\Real_+)}\leq M$ for some $M>0$. Given any $n\in \mathbb{N}$ there is a unique solution $y_1$ of the problem on $[0,t_n]$. Now consider the following problem
\begin{subequations}\label{blowup}
\begin{empheq}{align}
&iy_{t}+y_{xx}=\kappa|y|^py \quad (x,t)\in\mathbb{R}_+\times (t_n,T),\\
&y(x,t_n)=y_0(x),\\
&y_x(0,t)=g(t)
\end{empheq}
\end{subequations}
Then \eqref{blowup} has a unique local solution $y_2$ on some interval $[t_n,t_n+\delta]$. Now choose $n$ such that $t_n+\delta>T_{max}$. Define $y$ such that $y\equiv y_1$ on $[0,t_n)$ and $y\equiv y_2$ on $[t_n,t_n+\delta]$. Then $y$ is a local solution on $[0,t_n+\delta]$ which is a contradiction.
\end{proof}
\subsection{Coupled system of NLS equations}
In this section we prove local well-posedness for the cubic coupled NLS ibvp on the half line. We consider the following system:
\begin{subequations}\label{couple}
\begin{empheq}{align}
ip_t+p_{xx} & = a|q|^2p, & (x,t)\in \Real_+ \times (0,T) \label{coupled_eq_1},\\
p(x,0) & =p_0(x),\\
\gamma_mp(0,t) & =g(t),\\
iq_t+q_{xx} & =b|p|^2q, & (x,t)\in \Real_+ \times (0,T) \label{coupled_eq_2} \\
q(x,0) & =q_0(x),\\
\gamma_nq(0,t) & =h(t),
\end{empheq}
\end{subequations}
where $a,b\in \mathbb{C}$ and $m,n\in \{0,1\}$.
\begin{thm}[Low regularity solutions]\label{couple_thm_low}
Let $s\in [0,1/2)$, $\lambda = \frac{8}{1-2s}$ and $r=\frac{4}{1+2s}$. Let $p_0,q_0\in H_x^s(\Real_+)$ and for $m,n\in \{0,1\}$ let $g\in H_t^{\frac{2s+1-2m}{4}}(0,T)$ and $h\in H_t^{\frac{2s+1-2n}{4}}(0,T)$. Then there is $T_0\in(0,T]$ such that the system \eqref{couple} has a unique solution $$(p,q)\in L_t^{\lambda}(0,T_0;H_x^{s,r}(\Real_+))^2\cap C([0,T_0];H_x^s(\Real_+))^2$$ Moreover the data-to-solution map is locally Lipschitz continuous.
\end{thm}
\begin{proof}
We will again use a fixed point argument. Let $\Psi_m[y_0,g,f]$ denote the solution operator of \ref{lineq} for $B=\gamma_m$.
Then it is easy to see that a solution of \eqref{couple}
corresponds to a fixed point of the operator
\begin{equation}\label{lambda}
    \Lambda : (p,q)\mapsto \Big(\Psi_m[p_0,g,a|q|^2p],\Psi_n[q_0,h,b|p|^2q]\Big)
\end{equation}
We define the space $X_{T_0}$ for the sought-after solution as
\begin{equation*}
    X_{T_0}\coloneqq \{(p,q)\in L_t^{\lambda}(0,T_0;H_x^{s,r}(\Real_+))^2: |p|_{L_t^{\lambda}(0,T_0;H_x^{s,r}(\Real_+))}+|q|_{L_t^{\lambda}(0,T_0;H_x^{s,r}(\Real_+))}\leq R \}
\end{equation*}
equipped with the metric
\begin{equation*}
    d((p_1,q_1),(p_2,q_2))= |p_1-p_2|_{L_t^{\lambda}(0,T_0;L_x^{r}(\Real_+))}+|q_1-q_2|_{L_t^{\lambda}(0,T_0;L_x^{r}(\Real_+))}.
\end{equation*}
So that $(X_{T_0},d)$ is a complete metric space. We claim that there is $T_0>0$ such that $\Lambda$ has a fixed point in $X_{T_0}$.
We have, similar to the proof of Theorem \ref{neumannthm},
\begin{align*}
    |\Lambda (p,q)|_{X_{T_0}} \leq  & c_{T_0} \big(|p_0|_{H^s_x(\mathbb{R}_+)}+|a|q|^2p|_{L_t^{\lambda'}(0,{T_0};H_x^{s,r'}(\Real_+))}+|g|_{H_t^{\frac{2s+1-2m}{4}}(0,{T_0})}\\
    & +|q_0|_{H^s_x(\mathbb{R}_+)}+|b|p|^2q|_{L_t^{\lambda'}(0,{T_0};H_x^{s,r'}(\Real_+))}+|h|_{H_t^{\frac{2s+1-2n}{4}}(0,{T_0})}\big).
\end{align*}
Thus, for the invariance of $X_{T_0}$, it will suffice to show the following estimate
\begin{equation}\label{est_1}
    ||p|^2q|_{L^{\lambda'}(0,T_0;H^{s,r'}(\Real_+)}+||q|^2p|_{L^{\lambda'}(0,T_0;H^{s,r'}(\Real_+))}\lesssim c(T_0,R,{\theta})
\end{equation} with $c(T_0,R,{\theta})$ getting smaller as $T_0$ gets smaller. To prove that $\Lambda$ is a contraction it is enough to show that for $(p_1,q_1)$ and $(p_2,q_2)$ in $X_{T_0}$, we have
\begin{align}\label{est_2}
    & ||q_1|^2p_1-|q_2|^2p_2|_{ L_t^{\lambda'}(0,T_0;L_x^{r'}(\Real_+))}+|  |p_1|^2q_1-|p_2|^2q_2|_{L_t^{\lambda'}(0,T_0;L_x^{r'}(\Real_+))}\nonumber
    \\ \lesssim & c(T_0,R,{\theta}) |(p_1,q_1)-(p_2,q_2)|_{L_t^{\lambda}(0,T_0;L_x^{r}(\Real_+))}.
\end{align}

We choose $\mu$ and $\rho$ such that
$    \frac{1}{\lambda'}=\frac{1}{\mu}+\frac{2}{\lambda},$
   $ \frac{1}{r'}=\frac{2}{\rho}+\frac{1}{r} $.
We have the Sobolev embedding $\dot{H}^{s,r'}\xhookrightarrow{} L^{\rho}$ and $\mu < \lambda$. Let
\begin{equation}\label{def_theta}
    \theta \coloneqq \frac{1}{\mu}-\frac{1}{\lambda} > 0
\end{equation}
and let
\begin{equation}\label{def_j}
    \frac{1}{j}\coloneqq \frac{1}{\rho}+\frac{1}{r}.
\end{equation}
Then given $(p,q)\in X_{T_0}$, by the generalized Hölder inequality, we have
\begin{align*}
    |D^s(|p^*(t)|^2q^*(t))|_{{L}_x^{r'}} & \lesssim |D^s(|p^*(t)|^2)|_{L_x^j} |q^*(t)|_{L_x^{\rho}} + ||p^*(t)|^2|_{L_x^{\frac{\rho}{2}}} |D^sq^*(t)|_{L_x^r}.
\end{align*}
The first term on the RHS is estimated as
\begin{equation}\label{iterate}
    |D^s(|p^*(t)|^2)|_{L_x^j}\lesssim |D^sp^*(t)|_{L_x^r}|\overline{p}^*(t)|_{L_x^{\rho}} \leq |p^*(t)|^2_{\dot{H}_x^{s,r}}.
\end{equation}
Thus we have
\begin{equation}\label{estt}
    |D^s(|p^*|^2q^*)|_{L_x^{r'}} \lesssim |p^*|_{\dot{H}_x^{s,r}}^2|q^*|_{\dot{H}_x^{s,r}} \lesssim |p|_{\dot{H}_x^{s,r}(\Real_+)}^2|q|_{\dot{H}_x^{s,r}(\Real_+)}.
\end{equation}
Now applying Hölder's inequality in time, using \eqref{estt} and \eqref{def_theta}, we get
\begin{align*}
    ||p|^2q|_{L_t^{\lambda'}(0,T;\dot{H}_x^{s,r'}(\Real_+))} & \leq ||p^*|^2q^*|_{L_t^{\lambda'}(0,T;\dot{H}_x^{s,r'})}\\
    & \leq \big ||p|_{\dot{H}_x^{s,r}(\Real_+)}^2 \big |_{L_t^{\frac{\lambda}{2}}(0,T_0)} \big ||q|_{\dot{H}_x^{s,r}(\Real_+)} \big |_{L_t^{\mu}(0,T_0)}\\
    & \leq T_0^{\theta} |p|^2_{L_t^{\lambda}(0,T_0;\dot{H}_x^{s,r}(\Real_+)} |q|_{L_t^{\lambda}(0,T_0;\dot{H}_x^{s,r}(\Real_+))} \leq T_0^{\theta}R^3.
\end{align*}
Using the same $\mu$ and $\rho$, applying Hölder inequality twice gives
\begin{equation*}
    ||p|^2q|_{L^{\lambda'}(0,T;L^{r'}(\Real_+))} \leq T_0^\theta |p|^2_{L^{\lambda}(0,T_0;\dot{H}^{s,r}(\Real_+))} |q|_{L^{\lambda}(0,T_0;L^{r}(\Real_+))}\leq c_2T_0^{\theta}R^3.
\end{equation*}
Thus we can choose $T_0$ and $R$ to enforce invariance of $X_{T_0}$ under $\Lambda$. Now observe that we have
\begin{align}\label{mnp}
|p_1|^2q_1-|p_2|^2q_2 & =\frac{1}{2}(q_1-q_2) (|p_1|^2+|p_2|^2)\nonumber\\
& +\frac{1}{2}(q_1+q_2)\big[(p_1-p_2)(\overline{p_1}+\overline{p_2})+(p_1+p_2)(\overline{p_1}-\overline{p_2})\big]
\end{align}
from which we obtain
\begin{align*}
    ||p_1^*|^2q_1^*-|p_2^*|^2q_2^*|_{ L_t^{\lambda'}(0,T_0;L_x^{r'})} & \lesssim |q_1-q_2|_{L_t^{\mu}(0,T_0;L_x^r)}||p_1|^2+|p_2|^2|_{L_t^{\frac{\lambda}{2}}(0,T_0;L_x^{\frac{\rho}{2}})}\\ & + |p_1-p_2|_{L_t^{\mu}(0,T_0;L_x^r)}|(\overline{p_1}+\overline{p_2})(q_1+q_2)|_{L_t^{\frac{\lambda}{2}}(0,T_0;L_x^{\frac{\rho}{2}})}\\
    & + |\overline{p_1}-\overline{p_2}|_{L_t^{\mu}(0,T_0;L_x^r)}|(p_1+p_2)(q_1+q_2)|_{L_t^{\frac{\lambda}{2}}(0,T_0;L_x^{\frac{\rho}{2}})}\\
    & \lesssim T^{\theta}_0 R^2 d((p_1,q_1),(p_2,q_2)).
\end{align*}
So we have proved \eqref{est_1} and \eqref{est_2}. The uniqueness of the fixed point and the local Lipschitz continuity of the data-to-solution map follows as in the proof of Theorem \ref{neumannthm}.
\end{proof}
Now we will prove local well-posedness for high regularity solutions of \eqref{couple}. Throughout, we will assume the necessary compatibility conditions between initial and boundary data.
\begin{thm}[High regularity solutions]\label{couple_thm_high}
Let $s\in (\frac{1}{2}, \frac{5}{2})-\frac{3}{2}$, $p_0,q_0\in H_x^s(\Real_+)$ and for $m,n\in \{0,1\}$ let $g\in H_t^{\frac{2s+1-2m}{4}}(0,T)$ and $h\in H_t^{\frac{2s+1-2n}{4}}(0,T)$ satisfying necessary compatibility conditions. Then there is $T_0\in (0,T]$ such that the system \eqref{couple} has a unique solution $$(p,q)\in C([0,T_0];H_x^s(\Real_+))^2$$ and the data-to-solution map is locally Lipschitz continuous.
\end{thm}
\begin{proof}
We have to prove that the operator $\Lambda$ in \eqref{lambda} is continuous from $Y_{T_0}$ to itself for some ${T_0},R>0$ where
$$Y_{T_0}\coloneqq \{ (p,q)\in C([0,T_0];H^s(\Real_+))^2: |p|_{C([0,T_0];H^s(\Real_+))}+|q|_{C([0,T_0];H^s(\Real_+))}\leq R \}$$ equipped with the distance
$$d((p_1,q_1),(p_2,q_2))=|p_1-p_2|_{C([0,T_0];H^s(\Real_+))}+|q_1-q_2|_{C([0,T_0];H^s(\Real_+))}$$
From the proof of local well-posedness for the Neumann and Dirichlet problems for the NLS in the high regularity case (see \cite{FHM17} for the Dirichlet problem) one can see that it will suffice to deal only with the nonlinear terms $a|q|^2p$ and $b|p|^2q$. This case will be much simpler than the low regularity case because we can make use of the Banach algebra property. First, we have the following proposition
\begin{prop}[\cite{FHM17}]\label{f_prop}
    Let $f\in C([0,T];H^s(\Real_+))$, then
    $$\sup_{t\in [0,T]}|\Psi[0,0,f]|_{C([0,T];H^s(\Real_+))} \lesssim T \sup_{t\in [0,T]}|f|_{C([0,T];H^s(\Real_+))}.$$
\end{prop}
It follows from Proposition \ref{f_prop} and the submultiplicativity of the norm that
$$|\Psi[0,0,a|p|^2q]|_{Y_{T_0}}\leq c T_0 (|p|_{Y_{T_0}}^2|q|_{Y_{T_0}}+|q|_{Y_{T_0}}^2|p|_{Y_{T_0}})\leq c T_0 R^3,$$
where $c$ does not depend on $T_0$ or $R$. The same argument can be made for $|\Psi[0,0,b|q|^2p]|_{Y_{T_0}}$. Thus $T_0$ and $R$ can be taken small enough to make $Y_{T_0}$ invariant under $\Lambda$.

It follows trivially from \eqref{mnp} and the algebra property that $\Lambda$ can also be a contraction, exhibiting a fixed point $(p,q)\in Y_{T_0}$. Uniqueness and continuous dependece of this local solution can be proved in the same way as in the proof of Theorem \ref{neumannthm}.
\end{proof}
\begin{rem}
    The results in Theorems \ref{couple_thm_low} and \ref{couple_thm_high} can be extended to any positive integer power non-linearity by taking $a|q|^kp$ and $b|p|^kq$ where $k\in \mathbb{N}$ as the nonlinear term in \ref{coupled_eq_1} and \ref{coupled_eq_2}, respectively. In this case we let $\lambda$ and $r$ be as in Theorem \ref{neumannthm}, and take $\mu$ and $\rho$ such that
    $
    \frac{1}{\lambda'}=\frac{1}{\mu}+\frac{k}{\lambda},$
    $\frac{1}{r'}=\frac{k}{\rho}+\frac{1}{r}.$ Then we can go over the same arguments, iterating \eqref{iterate} $k$ times to obtain the desired result.
\end{rem}
\begin{rem}[Global wellposedness with inhomogeneous boundary data]
    Global well-posedness for the coupled system is a more difficult problem than it is for the single NLS due to the term $a|q|^2p\overline{p_t}$ that arises upon multiplying equation \eqref{coupled_eq_1} by $\overline{p_t}$. It is not easy to control this term, however, it is possible to prove global wellposedness for sufficiently small power indices. Consider for example the problem
    \begin{subequations}\label{small_power}
        \begin{empheq}{align}
            ip_t+p_{xx} & = a|q|^{\alpha}p & (x,t)\in \Real_+ \times (0,T) \label{small_p}\\
            p(x,0) & =p_0(x)\\
            \partial_xp(0,t) & =g(t)\\
            iq_t+q_{xx} & =b|p|^{\alpha}q & (x,t)\in \Real_+ \times (0,T) \label{small_q} \\
            q(x,0) & =q_0(x)\\
            \partial_xq(0,t) & =h(t)
        \end{empheq}
    \end{subequations}
     with $a,b\in \mathbb{R}$. Assuming a local solution ($H_x^1(\mathbb{R}_+)$ in space) exists, one can prove global wellposedness by showing that $H^1$ norm of the solution, that is $|p(t)|_{H^1_x(\Real_+)}+|q(t)|_{H^1_x(\Real_+)}$ does not blow up in finite time. We claim that it is controlled by the sum of $H^1$ norms of initial and boundary inputs uniformly in $t\in [0,T]$.

     Upon multiplying \eqref{small_p} with $\overline{p}$, taking imaginary parts, integrating in space-time, using Sobolev trace inequality and Cauchy-Schwarz inequality one obtains:
     \begin{equation}\label{l2_estp}
         |p(t)|^2_{L^2_x(\Real_+)}\lesssim |p_0|^2_{L^2_x(\Real_+)} + |g|_{L_t^{2}(0,T)}\left(\int_0^t|p(s)|_{H^1_x(\Real_+)}^2ds\right)^{1/2}.
     \end{equation}
 Similar arguments applied to the second equation yield
      \begin{equation}\label{l2_estq}
 	|q(t)|^2_{L^2_x(\Real_+)}\lesssim |q_0|^2_{L^2_x(\Real_+)} + |h|_{L_t^{2}(0,T)}\left(\int_0^t|q(s)|_{H^1_x(\Real_+)}^2ds\right)^{1/2}.
 \end{equation}
Note that same multipliers also lead to the estimates $$\int_0^\infty\partial_t|p(t)|^2dx\lesssim |g|_{H_t^{1}(0,T)}|p(t)|_{H_x^1(\mathbb{R}_+)}$$ and $$\int_0^\infty\partial_t|q(t)|^2dx\lesssim |h|_{H_t^{1}(0,T)}|q(t)|_{H_x^1(\mathbb{R}_+)}.$$
     Multiplying \eqref{small_p} by $\overline{p}_t$, taking real parts, integrating in space-time,  using integration by parts, Sobolev embedding $H_t^1(0,T)\hookrightarrow L_t^\infty(0,T)$, Sobolev trace theorem and making use of the above inequalities we get
     \begin{align}\label{rel_2}
         |p_x(t)|_{L_x^2(\mathbb{R}_+)}^2 \lesssim & |p_0'|^2_{L^2_x(\Real_+)} + |g(0)|\cdot |p_0(0)|+|g|_{H_t^1(0,T)}|p(t)|_{H_x^1(\mathbb{R}_+)}\nonumber\\
         &+|g|_{H_t^1(0,T)}\left(\int_0^t|p(s)|_{H_x^1(\mathbb{R}_+)}^2ds\right)^{1/2}\nonumber\\
         &+|a|\cdot|g|_{H_t^1(0,T)}\cdot\int_0^t|q(s)|_{H_x^1(\mathbb{R}_+)}^\alpha |p(s)|_{H_x^1(\mathbb{R}_+)}ds.
     \end{align}
A similar estimate also holds for the second equation:
     \begin{align}\label{newq00}
	|q_x(t)|_{L_x^2(\mathbb{R}_+)}^2 \lesssim & |q_0'|^2_{L^2_x(\Real_+)} + |h(0)|\cdot |q_0(0)|+|h|_{H_t^1(0,T)}|q(t)|_{H_x^1(\mathbb{R}_+)}\nonumber\\
	&+|h|_{H_t^1(0,T)}\left(\int_0^t|q(s)|_{H_x^1(\mathbb{R}_+)}^2ds\right)^{1/2}\nonumber\\
	&+|b|\cdot|h|_{H_t^1(0,T)}\cdot\int_0^t|p(s)|_{H_x^1(\mathbb{R}_+)}^\alpha |q(s)|_{H_x^1(\mathbb{R}_+)}ds.
\end{align}
Set $E(t):= |p(t)|_{H^1_x(\Real_+)}^2+|q(t)|_{H^1_x(\Real_+)}^2$. Then, in view of above estimates we get an inequality of the form
$$E(t)\lesssim c_1+c_2\int_0^tE(s)ds+c_3\int_0^tE^{\frac{1+\alpha}{2}}(s)ds,$$ with positive constants $c_i$, $i=1,2,3,$ depending on fixed parameters such as $a$, $b$, $T$, $|p_0|_{H^1_x(\mathbb{R}_+)}$, $|q_0|_{H^1_x(\mathbb{R}_+)}$, $|g|_{H_t^1(0,T)}$, and $|h|_{H_t^1(0,T)}$.  If $\alpha\le 1$, then this inequality reduces to
$$E(t)\lesssim C_1+C_2\int_0^tE(s)ds,$$ from which we obtain the desired uniform bound using Gronwall's inequality. Hence, the global wellposedness follows.  Problem remains open for large $\alpha$.
\end{rem}

\bibliographystyle{acm}
\bibliography{impbib}
\end{document}